\newtheorem{theorem}{Theorem}
\newtheorem{definition}{Definition}
\newtheorem{lemma}{Lemma}
\newtheorem{remark}{Remark}
\begin{document}

\title[I.H.F and flat circle bundles]{Intrinsically harmonic forms and flat circle bundles}

\author{Elizeu França}
\address{Universidade Federal do Amazonas \\
69103-128 Itacoatiara, AM, Brazil}
\email{elizeufranca@ufam.edu.br}

\author{Francesco Mercuri}
\address{Departament of Mathematics - IMECC, Universidade Estadual de Campinas \\
13083-970 Campinas, SP, Brazil}
\email{mercuri@ime.unicamp.br}

\subjclass[2020]{Primary 57R22; Secondary 53C65, 53C05, 53C15.}

\keywords{circle bundle, foliated bundles, flat bundles, intrinsically harmonic forms}

\begin{abstract}
We establish a criterion for the flatness of a principal circle bundle in terms of the intrinsically harmonic form problem. It states that the flatness is equivalent to the intrinsic harmonicity of a certain natural associated form. 
\end{abstract}

\maketitle

\section*{Acknowledgements}
This study was financed in part by the Coordena\c{c}\~ao de Aperfei\c{c}oamento de Pessoal de N\'ivel Superior - Brasil (CAPES).

\section*{Introduction and statement of results}

A classical theorem due to Hodge states that in each De Rham's cohomology class of a compact Riemannian manifold, there is one, and only one, harmonic form. A natural question is the following.
\begin{center}
{\em Given a closed-form $\omega$ on a compact manifold $M$, is there a Riemannian metric $g$ on $M$ such that $\omega$ is harmonic concerning $g$?}
\end{center}
If such a metric exists, $\omega$ is called an {\em intrinsically harmonic form} (I.H.). The problem of obtaining an intrinsic characterization of harmonic forms was first placed by Calabi  \cite{calabi1969intrinsic}.  
He showed  that an $1$-form $\omega$ is I.H., under suitable conditions on its zero-set,  if and only if it is \textit{transitive}. 
Transitivity means that for every point $x$ that is not a zero of $\omega$, there is an embedded circle containing $x$, such that the restriction of $\omega$ to it never vanishes. Volkov \cite{volkov2008characterization} made the most notable advance in this subject. He generalized Calabi's characterization of I.H. $1$-forms by showing that a closed 1-form is I.H. if, and only if, it is harmonic in a neighborhood of its zero set and transitive.  The condition of transitivity can be generalized for higher degree forms. Roughly speaking,  a $k$-form $\omega$ is transitive if any ``regular point'' is contained in a closed submanifold, to which  $\omega$ restricts to a volume form. In \cite{honda1997harmonic}  Honda proved a ``dual'' version of Calabi-Volkov's result by showing that a transitive closed ($n-1$)-form is I.H., under suitable conditions on its zero-set. 

By a standard foliation's argument, one can prove that nowhere-vanishing closed 1-form on a closed manifold is transitive, hence I.H. Naturally, the following issue comes up. Is a nowhere-vanishing closed $(n-1)$-form I.H.? As we will see, there is a relation between this issue and the one of obtaining conditions so that a nondegenerate volume-preserving flow admits a global cross-section. 

Given a volume-form $\Omega$ on an orientable manifold, every closed $(n-1)$-form corresponds to a unique vector field which induces a flow preserving $\Omega$. Conversely, any volume-preserving flow induces a closed $(n-1)$-form. Furthermore, the fixed points of the flow are exactly the singular points of the associate form (Lemma \ref{associated form}). In Honda's thesis, the following criterion is implicitly given

\begin{theorem}
Let $M$ be a closed orientable manifold. A nowhere-vanishing volume-preserving flow defined on $M$ admits a global cross-section if, and only if, the induced closed nowhere-vanishing $(n-1)$-form is I.H.
\end{theorem}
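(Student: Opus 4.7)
The plan is to treat the two directions separately: the forward implication follows from Honda's intrinsic harmonicity criterion, while the converse is obtained via a Tischler-type fibration argument applied to the Hodge dual of $\omega$.

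For the forward direction, let $X$ be the nowhere-vanishing vector field with $\omega=\iota_X\Omega$, and suppose its flow $\phi_t$ admits a global cross-section $N\subset M$, i.e.\ a closed codimension-one embedded submanifold transverse to $X$ that is met by every orbit. Since $\omega$ is closed, Cartan's formula gives $\mathcal{L}_X\omega=d\iota_X\omega+\iota_Xd\omega=0$, so $\phi_t^*\omega=\omega$. For any $p\in M$, choose $t$ with $\phi_{-t}(p)\in N$; then $p\in\phi_t(N)$, a closed codimension-one submanifold transverse to $X$. On $\phi_t(N)$, the restriction $\omega|_{\phi_t(N)}(Y_1,\dots,Y_{n-1})=\Omega(X,Y_1,\dots,Y_{n-1})$ is a volume form, because $X$ is a transverse complement to $T\phi_t(N)$. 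Hence $\omega$ is transitive, and, as $\omega$ has no zeros, the zero-set hypotheses in Honda's theorem are vacuous, so $\omega$ is intrinsically harmonic.

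For the converse, suppose $\omega$ is harmonic with respect to some metric $g$, and let $dV_g$ be its Riemannian volume form. Writing $\Omega=\phi\,dV_g$ with $\phi>0$, set $Z:=\phi X$, so that $\iota_Z dV_g=\omega$ and the flow of $Z$ is a reparametrization of the flow of $X$; in particular, any global cross-section for $Z$ is one for $X$. A short computation in an adapted orthonormal frame yields $\ast\omega=(-1)^{n-1}Z^\flat$, where $Z^\flat$ is the $g$-dual of $Z$. Harmonicity of $\omega$ gives $d\ast\omega=0$, so $\alpha:=\pm\ast\omega$ (with the sign making $\alpha=Z^\flat$) is a closed $1$-form satisfying $\alpha(Z)=|Z|_g^{2}>0$ everywhere.

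Now I would invoke Tischler's theorem: the closed, nowhere-vanishing $1$-form $\alpha$ can be $C^{0}$-approximated by closed $1$-forms with integer periods. By compactness of $M$, a sufficiently close approximation $\alpha'$ still satisfies $\alpha'(Z)>0$, and $\alpha'$ is the pullback of the standard angle form $d\theta$ under a submersion $f\colon M\to S^{1}$. Every fiber of $f$ is a compact codimension-one submanifold transverse to $Z$ (hence to $X$), and each orbit of $Z$ meets every fiber because $\theta\circ\phi^{Z}_{t}$ is strictly increasing in $t$. Thus any fiber of $f$ is a global cross-section for the flow.

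The main obstacle is the converse: one must correctly identify the Hodge dual of the associated $(n-1)$-form with the metric dual of the associated vector field, and then use Tischler's approximation to convert the resulting positivity statement for $\alpha$ into an honest fibration over $S^{1}$ whose fibers realize the desired cross-section. The forward direction is, by contrast, a direct transitivity verification combined with an appeal to Honda's theorem.
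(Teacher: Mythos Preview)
Your proof is correct and follows essentially the same route as the paper: for the direction cross-section $\Rightarrow$ I.H.\ you translate the cross-section along the flow to obtain transitivity and then invoke Honda (the paper instead uses its in-house Calabi argument, Theorem~\ref{transitive nowhere-vanishing 1 or (n-1)-forms are I.H.}, to the same effect); for I.H.\ $\Rightarrow$ cross-section you, like the paper, take the closed $1$-form $*_g\omega$ and apply Tischler's integral-period approximation to produce a submersion $f\colon M\to\mathbb{S}^1$. The only minor variation is that you conclude each orbit meets every fiber by the elementary monotonicity of $f$ along orbits (since $\alpha'(Z)>0$ is bounded below on the compact $M$), whereas the paper appeals to Ehresmann's foliated-bundle theorem for the same conclusion.
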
 

Using this Theorem and a standard foliation's argument, we can prove that the existence of a transversal $C^r$-foliation ($r\geq 2$) is equivalent to the existence of a global cross-section (Theorem \ref{An intrinsic characterization of nowhere-vanishing harmonic (n-1)-forms by foliation}).

We look for examples of $(n-1)$-forms that are {\em not} I.H. among the nowhere-vanishing given in circle bundles. Since a closed nowhere-vanishing $(n-1)$-form is I.H. if, and only if, the induced flow admits a complementary foliation, the pull-back of the volume form on a circle bundle is an I.H. form if, and only if, the bundle is smoothly foliated (or admits a global cross-section).

The first criterion to decide if a flow admits a global cross-section is due to Schwartzman \cite{schwartzman1957asymptotic}. He introduced the notion of asymptotic cycles. Such a cycle is a real homology class defined for each invariant measure. A global cross-section is determined by an integral 1-dimensional cohomology class that is positive on all asymptotic cycle. With this result in the hands, we provide a class of I.H. forms, and by using such a class, we characterize flat circle bundles. Our main results are the following theorems.

\begin{theorem}\label{a class of i.h. forms}
Let $M$ be a  closed smooth manifold  with a nowhere-vanishing closed $(n-1)$-form $\omega$ inducing a pointwise periodic flow. If the orbits   of the induced flow are homologous to each other and $[\omega]\neq 0$ in $H^{n-1}_{DR}(M)$, then $\omega$ is I.H. Furthermore, there exists a smooth $\mathbb{S}^1$-action on $M$ with the same orbits as the induced flow.
\end{theorem}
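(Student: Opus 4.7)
The plan is to invoke the theorem of Honda stated in the introduction to reduce intrinsic harmonicity of $\omega$ to the existence of a global cross-section for the volume-preserving flow $\phi_t$ induced by $\omega$, and then to produce the cross-section via Schwartzman's asymptotic cycle criterion: it suffices to exhibit an integral class $[\alpha]\in H^{1}(M;\mathbb{Z})$ whose pairing with every asymptotic cycle of $\phi_t$ is strictly positive.

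The first step is to describe the asymptotic cycles. Since $\phi_t$ is pointwise periodic, every $\phi_t$-invariant Borel probability measure decomposes as a barycenter of normalized arc-length measures carried by individual periodic orbits; consequently the associated Schwartzman cycle is a convex combination of classes of the form $\tau(x)^{-1}[\gamma_x]\in H_{1}(M;\mathbb{R})$, where $\gamma_x$ is the orbit through $x$ and $\tau(x)$ its minimal period. The hypothesis that all orbits are mutually homologous forces $[\gamma_x]=[\gamma]$ for a single class $[\gamma]$, so every asymptotic cycle lies on the ray $\mathbb{R}_{>0}\cdot[\gamma]$, provided $[\gamma]\neq 0$.

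The next step is to deduce $[\gamma]\neq 0$ from $[\omega]\neq 0$. Writing $\omega=\iota_{X}\Omega$ for the invariant volume form $\Omega$, the identity $\alpha\wedge\omega=\alpha(X)\,\Omega$ gives, for every closed $1$-form $\alpha$,
\[
\int_{M}\alpha\wedge\omega=\int_{M}\alpha(X)\,\Omega,
\]
and the right-hand side equals, up to normalization, the pairing of $[\alpha]$ with the asymptotic cycle of the normalized volume measure. Since $[\omega]\neq 0$, Poincar\'e duality produces a closed $\alpha$ with $\int_{M}\alpha\wedge\omega\neq 0$; by the previous paragraph this forces $[\gamma]$ to be non-torsion in $H_{1}(M;\mathbb{Z})$. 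Hence there exists $[\alpha_{0}]\in H^{1}(M;\mathbb{Z})$ with $\langle[\alpha_{0}],[\gamma]\rangle>0$, and this class is automatically positive on every asymptotic cycle. Schwartzman then yields a smooth global cross-section, and Honda's theorem implies that $\omega$ is intrinsically harmonic.

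For the last assertion, let $\Sigma$ be such a cross-section and $P\colon\Sigma\to\Sigma$ the first-return diffeomorphism, which is pointwise periodic. By Montgomery's theorem on pointwise periodic homeomorphisms of connected manifolds, $P$ admits a uniform period $P^{k}=\mathrm{id}$; the common-homology hypothesis further forces the minimal period of $P$ to equal a single integer $d$ at every point, whence $P$ generates a free smooth $\mathbb{Z}/d$-action on $\Sigma$. Realizing $M$ as the mapping torus $\Sigma\times[0,1]/(x,1)\sim(P(x),0)$ and letting $\mathbb{S}^{1}=\mathbb{R}/d\mathbb{Z}$ rotate the interval factor produces a smooth free $\mathbb{S}^{1}$-action whose orbits agree setwise with those of $\phi_t$. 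The most delicate step I foresee is the rigorous identification of the Poincar\'e dual of $[\omega]$ with the asymptotic cycle of the invariant volume; once that link is in place, the remainder is a clean assembly of the classical results of Schwartzman, Honda, and Montgomery.
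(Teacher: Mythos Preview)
Your argument for intrinsic harmonicity coincides with the paper's: both pass through Schwartzman's criterion, identify the asymptotic cycle of the invariant volume measure with the diffuse current $[\omega]$ (this is Lemma~\ref{canonical asymptotic cycle associated with a volume-preserving flow} in the paper), use the relation $\lambda(x)A_x=[C_x]$ for periodic points (Lemma~\ref{asymptotic cycles of periodic points}), and conclude via the cross-section characterization. The ``delicate step'' you single out is precisely the content of Lemma~\ref{canonical asymptotic cycle associated with a volume-preserving flow}.

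For the $\mathbb{S}^1$-action your route genuinely differs. The paper builds a Riemannian metric making the generating field $X$ Killing---declaring $X$ a unit vector and $\ker\eta$ orthogonal to it, where $\eta$ is the transverse closed $1$-form produced in the first part---and then invokes a separate result (cited as a preprint of the first author) that a pointwise periodic flow by isometries is globally periodic, hence a circle action. Your mapping-torus construction is more self-contained and avoids that external citation. Two steps in your sketch deserve a line of justification: the constancy of the first-return period follows because the algebraic intersection number $[\gamma]\cdot[\Sigma]$ equals the number of times any orbit meets $\Sigma$ (so Montgomery is in fact redundant once this is observed), and the smooth identification of $M$ with the mapping torus requires reparametrizing by the smooth first-return time before the translation action in the suspension coordinate becomes visibly smooth. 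Note finally that the paper's Killing-field argument uses only intrinsic harmonicity and pointwise periodicity, not the common-homology hypothesis, and therefore establishes a slightly stronger converse than your version.
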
 

\begin{theorem}\label{main result}
 A differentiable principal circle bundle $\mathcal{B}=\{B,p,M,\mathbb{S}^1\}$ over a closed orientable base admits a flat connection, if and only if, one (and consequently all) of the four conditions below holds:
\begin{itemize}
\item[(1)] $p^*(\Omega_M)$ is I.H.;
\item[(2)] $p^*(\Omega_M)$ determines a nonzero cohomology class;
\item[(3)] the fiber represents a nonzero class in $H_1(B;\mathbb{R})$;
\item[(4)] $\chi(\mathcal{B})$, the Euler's class of $\mathcal{B}$, is a torsion element. 
\end{itemize}
\end{theorem}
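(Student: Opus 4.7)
The plan is to establish the cycle flat $\Leftrightarrow$ (4) $\Leftrightarrow$ (2) $\Leftrightarrow$ (3), together with (1) $\Leftrightarrow$ (2), by combining the Gysin sequence of the circle bundle with a direct application of Theorem \ref{a class of i.h. forms}.

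The equivalence flat $\Leftrightarrow$ (4) is classical: by Chern--Weil, the real Euler class of a flat principal $\mathbb{S}^1$-bundle vanishes, so $\chi(\mathcal{B})$ is torsion; conversely, the exponential short exact sequence $0 \to \mathbb{Z} \to \mathbb{R} \to \mathbb{S}^1 \to 0$ identifies the kernel of $H^2(M;\mathbb{Z}) \to H^2(M;\mathbb{R})$ with the image of $H^1(M;\mathbb{S}^1) = \mathrm{Hom}(\pi_1(M), \mathbb{S}^1)$, i.e., with the Euler classes of circle bundles arising from representations of $\pi_1(M)$ -- which are exactly the flat ones. For the equivalences (2) $\Leftrightarrow$ (4) and (3) $\Leftrightarrow$ (4), I would invoke the Gysin sequence with real coefficients,
$$\cdots \to H^{k-2}(M;\mathbb{R}) \xrightarrow{\cup e} H^{k}(M;\mathbb{R}) \xrightarrow{p^*} H^{k}(B;\mathbb{R}) \xrightarrow{p_*} H^{k-1}(M;\mathbb{R}) \to \cdots,$$
with $e = \chi(\mathcal{B})$. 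Setting $k=n$ and using $H^n(M;\mathbb{R}) = \mathbb{R}\cdot[\Omega_M]$, the class $p^*[\Omega_M]$ is nonzero iff the map $\cup e : H^{n-2}(M;\mathbb{R}) \to H^n(M;\mathbb{R})$ vanishes; by Poincaré duality on the closed orientable manifold $M$ this happens exactly when $e = 0$ in $H^2(M;\mathbb{R})$, giving (2) $\Leftrightarrow$ (4). Setting $k = 1$, the image of the fiber integration $p_* : H^1(B;\mathbb{R}) \to \mathbb{R} = H^0(M;\mathbb{R})$ coincides with $\ker(\cup e : \mathbb{R} \to H^2(M;\mathbb{R}))$, which is nontrivial precisely when $e$ is torsion; by the Kronecker pairing on $B$, $p_*$ is nonzero iff there is a closed 1-form with nonzero period on the fiber, i.e., iff $[F] \neq 0$ in $H_1(B;\mathbb{R})$, yielding (3) $\Leftrightarrow$ (4).

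For (1) $\Leftrightarrow$ (2) I would apply Theorem \ref{a class of i.h. forms}. Fix any (not necessarily flat) connection 1-form $\eta$ on $B$; then $\Omega_B := \eta \wedge p^*\Omega_M$ is a volume form on $B$, and the vector field $X$ associated to $p^*\Omega_M$ via $\iota_X \Omega_B = p^*\Omega_M$ is precisely the fundamental vector field of the $\mathbb{S}^1$-action, since $\eta(X) = 1$ and $\iota_X p^*\Omega_M = 0$. Its orbits are the fibers of $p$, hence the induced flow is pointwise periodic, nowhere vanishing, and all of its orbits are mutually homologous in $B$. If $[p^*\Omega_M] \neq 0$ the hypotheses of Theorem \ref{a class of i.h. forms} are met, so $p^*\Omega_M$ is I.H., proving (2) $\Rightarrow$ (1). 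Conversely, if $p^*\Omega_M$ is harmonic for some Riemannian metric on the closed manifold $B$, then Hodge theory forbids it from being exact unless it vanishes identically; since $p$ is a submersion and $\Omega_M$ is a volume form, $p^*\Omega_M$ is nowhere zero, hence $[p^*\Omega_M] \neq 0$.

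The hardest step, conceptually, is the bridge from (2) to (1): the key observation is that the vector field dual to $p^*\Omega_M$ with respect to \emph{any} connection-based volume on $B$ is automatically the infinitesimal generator of the $\mathbb{S}^1$-action, whose orbits are fibers and so trivially homologous to one another. This is exactly what reduces the intrinsic harmonicity question to Theorem \ref{a class of i.h. forms}; the remaining work is bookkeeping with the Gysin sequence (to compare (2), (3) and (4)) and a reminder of the classical flat-bundle classification (for flatness $\Leftrightarrow$ (4)).
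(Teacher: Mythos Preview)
Your argument is correct. The equivalences among (1)--(4) match the paper's proof closely: both you and the paper invoke Theorem~\ref{a class of i.h. forms} (restated as Theorem~\ref{examples of I.H. forms}) together with Hodge theory for (1)$\Leftrightarrow$(2), and both use the Gysin sequence (the paper via Remark~\ref{Plante's example}) for (3)$\Leftrightarrow$(4). Your derivation of (2)$\Leftrightarrow$(4) from the Gysin sequence at degree $n$ combined with Poincar\'e duality on $M$ is a clean alternative to the paper's route, which instead proves (2)$\Leftrightarrow$(3) by identifying $p^*\Omega_M$ explicitly as the Poincar\'e dual of the fiber via the Thom class (Lemma~\ref{fiber lemma}).

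The substantive divergence is in how flatness is tied to the four conditions. You argue flat $\Leftrightarrow$ (4) directly, using Chern--Weil for one direction and the exponential coefficient sequence $0\to\mathbb{Z}\to\mathbb{R}\to\mathbb{S}^1\to 0$ for the other; this is efficient and entirely classical. The paper instead establishes flat $\Leftrightarrow$ (1) internally (Theorems~\ref{The main theorem 1} and~\ref{characterization of flat circle bundle in the linguage of I.H. forms}): starting from (2) it shows $p^*\Omega_M$ is I.H., extracts a transverse foliation with compact leaves, applies Millett's theorem (Theorem~\ref{Montgomery generalization}) to conclude the holonomy group is finite, averages a circle metric so that the holonomy conjugates into $\mathbb{S}^1$, and thereby produces a flat principal structure $\mathrm{Diff}^+(\mathbb{S}^1)$-equivalent to $\mathcal{B}$. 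Your route is shorter and self-contained in algebraic topology; the paper's route is longer but is really the content being advertised, since it exhibits the flat connection as a consequence of the intrinsic-harmonicity machinery rather than importing it from the standard classification of flat $\mathbb{S}^1$-bundles.
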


Also, we presented similar results for the non-orientable cases. In degrees different from $  0, 1, n-1, n$, the question of obtaining an intrinsic characterization of harmonic forms is still open. A notable aspect of the problem, illustrating its difficulty,  is the fact that Calabi's argument does not apply to intermediate degrees (see Remark \ref{transitive p-form of rank k admits transversal forms}).

Throughout this work, all manifolds, differential forms (forms for short), flows and vector fields are considered to be $C^\infty$, unless explicitly otherwise stated.


\subsection{Hodge theory}


Let $\mathbb{V}$ be an Euclidean $n$-dimensional orientated vector space. Given $k\in\{0,1,\ldots,n\}$, the Hodge-star operator is defined as a unique operator
    \[
*:\wedge^k\mathbb{V} \longrightarrow \wedge^{n-k}\mathbb{V}
    \]
with the following property: for an oriented orthonormal basis $\{e_1,\ldots,e_n\}$ , the operator $*$ maps $e_1\wedge\ldots\wedge e_k$  to $e_{k+1}\wedge \ldots\wedge e_n$.  For a Riemannian manifold $(M,g)$ the Hodge-star operator is defined fiber-wise. Denoting by $\Omega$ the volume form on $M$ associated with the Riemannian metric on $g$, we have that

\begin{itemize}
\item[(1)] $(\omega,\eta)\rightarrow \int_M\omega\wedge *\eta$ defines an $L^2$-inner product on the space of $k$-forms;
\item[(2)] concerning this inner product, there is an $L^2$-norm on the space of $k$-forms, and $\omega\wedge *\omega=\parallel \omega\parallel \Omega$;
\item[(3)] for closed manifolds (compact and without boundary), the adjoint of the differential operator in $k$-forms is given by $d^*=(-1)^{n(p-1)+1}*d*$, where $n$ is the dimension of $M$. 
\end{itemize}

 \begin{definition}
 Let $(M,G)$ be a Riemannian manifold. The \textit{Laplace-Beltrami operator} concerning to $g$,  $\Delta_g:\Omega(M)\longrightarrow\Omega(M)$,  is defined by $\Delta_g=dd^*+d^*d$. A form $\omega$ is said  to be $g$-\textit{harmonic} provided $\Delta_g\omega=0$.
 \end{definition}

\begin{lemma}
Let $(M,g)$ be a closed Riemannian manifold and $\omega$ be a closed form on $M$. Then $\Delta_g\omega=0$ if and only if $d^*\omega=0$.
\end{lemma}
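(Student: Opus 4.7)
The plan is to prove the two directions separately, using the definition $\Delta_g = dd^* + d^*d$ together with the fact that $d^*$ is the $L^2$-adjoint of $d$ on a closed manifold (which is precisely item (3) of the list preceding the lemma).

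The easy direction is $d^*\omega = 0 \Rightarrow \Delta_g\omega = 0$: since $\omega$ is closed, $d\omega = 0$, so substituting into $\Delta_g\omega = dd^*\omega + d^*d\omega$ kills both terms immediately. No integration is needed here.

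For the converse, I would pair $\Delta_g\omega$ with $\omega$ in the $L^2$-inner product introduced earlier in the subsection. Using adjointness twice,
\[
\langle \Delta_g\omega,\omega\rangle_{L^2} = \langle dd^*\omega,\omega\rangle_{L^2} + \langle d^*d\omega,\omega\rangle_{L^2} = \|d^*\omega\|_{L^2}^2 + \|d\omega\|_{L^2}^2.
\]
Closedness of $\omega$ eliminates the second term, leaving $\langle \Delta_g\omega,\omega\rangle_{L^2} = \|d^*\omega\|_{L^2}^2$. If $\Delta_g\omega = 0$, then $\|d^*\omega\|_{L^2} = 0$, and since the $L^2$-inner product is positive definite on smooth forms, we conclude $d^*\omega = 0$ pointwise.

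The only nontrivial point to check is that the adjointness relation $\langle d\alpha,\beta\rangle_{L^2} = \langle\alpha,d^*\beta\rangle_{L^2}$ genuinely holds without boundary terms; this is where closedness of $M$ (compact, no boundary) enters, via Stokes' theorem applied to $d(\alpha\wedge *\beta)$. This is the main subtlety, but it is built into the formula for $d^*$ stated as item (3) above, so I would simply cite it rather than rederive it.
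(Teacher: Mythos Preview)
Your proof is correct and follows essentially the same approach as the paper: both arguments compute $(\Delta_g\omega,\omega)_{L^2}$ using the adjointness of $d$ and $d^*$, reduce it to $\|d^*\omega\|_{L^2}^2$ via the hypothesis $d\omega=0$, and conclude $d^*\omega=0$ from positive-definiteness. The only cosmetic difference is that the paper starts from $\|d^*\omega\|^2=(d^*\omega,d^*\omega)$ and works toward $(\omega,\Delta_g\omega)$, whereas you start from $(\Delta_g\omega,\omega)$ and work toward $\|d^*\omega\|^2$; the computations are the same read in opposite directions.
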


\begin{proof}
If $\omega$ is $g$-harmonic, then 
    \[
\parallel d^*\omega\parallel=(d^*\omega,d^*\omega)=(\omega,dd^*\omega+d^*\underbrace{d\omega}_{0})=(\omega,\underbrace{\Delta_g\omega}_{0})=0.
    \]
 It follows that $d^*\omega=0$. The converse is immediate.   
\end{proof}

\begin{theorem}(Hodge) In each cohomology class of a Riemannian closed manifold $M$ there exists a unique harmonic representation. 
\end{theorem}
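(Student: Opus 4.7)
The plan is to split the theorem into a short, purely algebraic uniqueness argument and a deeper existence statement that depends on the Hodge decomposition. For uniqueness, I would suppose that $\omega_1$ and $\omega_2$ are two $g$-harmonic forms in the same cohomology class, write $\omega_1-\omega_2=d\eta$, and use the lemma already proved above: a closed harmonic form satisfies $d^*\omega_i=0$. Then
\[
\parallel d\eta\parallel^2=(d\eta,\omega_1-\omega_2)=(\eta,d^*(\omega_1-\omega_2))=0,
\]
forcing $\omega_1=\omega_2$. This step is routine.

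For existence I would establish the orthogonal Hodge decomposition
\[
\Omega^k(M)=\mathcal{H}^k(M)\oplus d\bigl(\Omega^{k-1}(M)\bigr)\oplus d^*\bigl(\Omega^{k+1}(M)\bigr),
\]
where $\mathcal{H}^k(M)=\ker\Delta_g$ is the space of $g$-harmonic $k$-forms. The three summands are pairwise orthogonal in the $L^2$ inner product introduced above: $(d\alpha,d^*\beta)=(d^2\alpha,\beta)=0$, and $(h,d\alpha)=(d^*h,\alpha)=0$ whenever $h\in\mathcal{H}^k$. Granting the decomposition, given a closed form $\omega$ I would write $\omega=h+d\alpha+d^*\beta$ and observe that $0=d\omega=d d^*\beta$, so that
\[
\parallel d^*\beta\parallel^2=(d^*\beta,d^*\beta)=(\beta,dd^*\beta)=0.
\]
Hence $\omega=h+d\alpha$, which shows that the cohomology class $[\omega]$ contains the harmonic representative $h$.

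The main obstacle, and the only non-elementary step, is proving the Hodge decomposition itself. The way I would handle it is by invoking elliptic regularity: the Laplace--Beltrami operator $\Delta_g$ is a second-order, formally self-adjoint, elliptic operator acting on sections of $\Lambda^kT^*M$, and on a closed manifold its Sobolev theory yields that $\ker\Delta_g$ is finite-dimensional and that $\Omega^k(M)=\ker\Delta_g\oplus\mathrm{Im}\,\Delta_g$ as an $L^2$-orthogonal sum. Since $\Delta_g=dd^*+d^*d$, the image decomposes further as $d(\Omega^{k-1})+d^*(\Omega^{k+1})$, and these pieces are orthogonal by the identity above. Producing the Fredholm alternative for $\Delta_g$ in the smooth category requires either the theory of pseudodifferential operators or the Rellich--Kondrakov compactness theorem together with G{\aa}rding's inequality; this analytic input is the heart of the proof, while the rest of the argument is formal manipulation with $d$, $d^*$ and the $L^2$ inner product.
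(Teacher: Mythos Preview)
Your proof sketch is correct and follows the standard route to the Hodge theorem: uniqueness via the $L^2$ pairing and $d^*\omega_i=0$, and existence via the $L^2$-orthogonal Hodge decomposition, with the only serious analytic input being the Fredholm property of $\Delta_g$ coming from elliptic theory. There is nothing to object to mathematically.

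However, the paper does not actually prove this theorem. It is stated as the classical Hodge theorem and used as background; no proof or even sketch is offered in the text. So there is no ``paper's own proof'' to compare against. Your sketch is exactly the argument one finds in standard references (e.g.\ Warner, Wells, or Morita), and it is more than the paper itself supplies. If anything, you have filled in what the paper treats as a black box.
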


It follows from Hodge's Theorem that if $H^k_{DR}(M)=0$, then the only harmonic  form of degree $k$ is the null-form. 

\begin{definition}
Let $M$ be a manifold. A form $\omega$ on $M$ is said to be \textit{intrinsically harmonic} (I.H.) provided $\Delta_g\omega=0$ for some Riemannian  metric $g$ on $M$. 
\end{definition}

Hear  some simple observations.

\begin{itemize}
\item[1)] An exact form is intrinsically harmonic if and only if it is the zero form.
\item[2)] A 0-form is intrinsically harmonic if and only if is locally constant.
\item[3)] A non zero $n$-form is intrinsically harmonic if and only if it never vanishes.
\item[4)] A symplectic 2-form is harmonic with respect to the associated Riemannian metric (see \cite{honda1997harmonic,manfiosymplectic}).
\end{itemize}

\subsection{Associated vector field}

Let $M$ be an orientable manifold (with boundary or not). Let $\omega$  be an $(n-1)$-form on $M$, and $\Omega$ be a volume form on $M$. The next lemma establishes an one-to-one equivalence between closed nowhere-vanishing $(n-1)$-forms and volume preserving vector fields without singularities. 

\begin{lemma}\label{associated form}
There exists a unique vector field $X$ such that $\omega=\iota_X\Omega$. The form is closed if, and only if, the flow $\Phi$ generated by $X$ is volume preserving. Furthermore, the fixed points of $\Phi$ are the same as the zeros of $\omega$.
 \end{lemma}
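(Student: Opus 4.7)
The plan is to prove the three assertions sequentially, with the only nontrivial analytic tool being Cartan's magic formula.

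First, for the existence and uniqueness of $X$, I would observe that the interior-product map
\[
\Phi_p : T_pM \longrightarrow \wedge^{n-1}T_p^*M, \qquad v \longmapsto \iota_v \Omega_p,
\]
is linear between vector spaces of equal dimension $n$. Since $\Omega$ is a nowhere-vanishing top form, if $\iota_v\Omega_p = 0$ for some $v\neq 0$, completing $v$ to a basis $(v,e_2,\ldots,e_n)$ of $T_pM$ would give $\Omega_p(v,e_2,\ldots,e_n) = (\iota_v\Omega_p)(e_2,\ldots,e_n) = 0$, contradicting $\Omega_p \neq 0$. Thus $\Phi_p$ is injective, hence an isomorphism at each point. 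The pointwise inverses assemble into a smooth bundle map because $\Omega$ is smooth, and so there exists a unique smooth vector field $X$ with $\iota_X\Omega = \omega$.

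Second, I would establish the equivalence between $d\omega = 0$ and volume-preservation of $\Phi$ by Cartan's magic formula. Because $\Omega$ is of top degree, $d\Omega = 0$, so
\[
\mathcal{L}_X \Omega \;=\; d\,\iota_X\Omega + \iota_X d\Omega \;=\; d\omega.
\]
A standard consequence of the definition of the Lie derivative via the flow is that $\Phi_t^*\Omega = \Omega$ for all $t$ (on the domain of the flow) if and only if $\mathcal{L}_X\Omega = 0$; this in turn is equivalent to $d\omega = 0$.

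Third, the correspondence between zeros of $\omega$ and fixed points of $\Phi$ is immediate: a point $p$ is fixed by $\Phi$ if and only if $X_p = 0$, and by the pointwise isomorphism from the first step, $X_p = 0$ if and only if $\omega_p = \iota_{X_p}\Omega_p = 0$.

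I do not anticipate a genuine obstacle: the argument is essentially formal once one notes the pointwise nondegeneracy of $v \mapsto \iota_v\Omega_p$. The only point that deserves care is the smoothness of $X$, which follows because the bundle isomorphism $\Phi$ is built from the smooth form $\Omega$ and its inverse is therefore smooth as well.
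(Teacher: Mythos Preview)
Your proof is correct. Steps two and three (Cartan's formula for $\mathcal{L}_X\Omega = d\omega$, and the identification of zeros of $\omega$ with zeros of $X$ via nondegeneracy of $\Omega$) are essentially identical to the paper's argument.

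The only genuine difference is in the existence and uniqueness of $X$. You argue abstractly: the pointwise linear map $v\mapsto\iota_v\Omega_p$ is injective by nondegeneracy of $\Omega_p$, hence an isomorphism by a dimension count, and the smooth inverse of this bundle map yields $X$ directly. The paper instead gives an explicit local construction: on each chart it picks a frame $\{X_1^i,\ldots,X_n^i\}$ with $\iota_{X_1^i}\omega=0$ and $\Omega(X_1^i,\ldots,X_n^i)=1$, writes $\omega=\omega(X_2^i,\ldots,X_n^i)\,\iota_{X_1^i}\Omega$ in that chart, and then patches the local fields $\omega(X_2^i,\ldots,X_n^i)X_1^i$ with a partition of unity. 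Your route is shorter and delivers existence and uniqueness in one stroke; the paper's route is more hands-on and makes the local expression of $X$ explicit, but does not separately spell out the uniqueness (which of course follows from the same injectivity you use). Both are standard and equally valid.
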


\begin{proof}
An $(n-1)$-form $\omega$ on $M$ defines a unique vector field $X$ such that $\iota_X\Omega = \omega$, in the following way. Let $\{U_i\}$ be an atlas for $M$, and $\{\lambda_i\}$ a partition of the unity subordinated to it. On each open $U_i$, one can choose a local frame $\{X_1^i, \cdots X_n^i\}$  so that
\[
    \begin{cases} 
        \iota_{X^i_1}\omega \equiv 0, \\
        \Omega(X_1^i, \cdots X_n^i) \equiv 1.
    \end{cases}
\]
Once such choice is made, one can see that in $U_i$ the following equality holds:
\begin{equation}\label{omega_Omega}
        \omega = \omega(X_2^i, \cdots X_n^i)\iota_{X^i_1}\Omega.
\end{equation}
We define a field $X$ on the entire $M$ by setting
\[
    X:= \sum_i\lambda_i\omega(X_2^i, \cdots X_n^i)X_1^i.
\]
It then follows from Equation \ref{omega_Omega} that
\[
    \omega = \sum_i \lambda_i\omega = \sum_i\lambda_\omega(X_2^i, \cdots X_n^i)\iota_{X_1^i}\Omega = \iota_X\Omega.
\]
Suppose now that $d\omega=0$. Then
\[
    \mathcal{L}_X\Omega = \iota_X\mathrm{d}\Omega + \mathrm{d}\iota_X\Omega = \mathrm{d}\omega = 0,
\]
so that $X$ is indeed volume preserving. Conversely, given any volume preserving field $X$, the $1$-form $\omega = \iota_X\Omega$ is closed, since by Cartan's Formula
\[
\mathrm{d}\omega = \mathrm{d}(\iota_X\Omega) = \mathcal{L}_X\Omega - \iota_X\mathrm{d}\Omega = 0.
\]
Finally,  $\omega_x=0$ if and only if $i_{X_x}\Omega_x=0$. Since $\Omega_x\neq 0$, it follows that $\omega_x=0$ if, and only if $X_x=0$. 
\end{proof}

\begin{remark}
From now on, the symbols $X_\omega$, $\omega_X$, $\Phi_X$, etc. as well as the phrases \textit{associated form}, \textit{associated vector field}, \textit{associated flow}, etc., respectively,  will be used with the meaning given by the previous theorem.  Furthermore, the 
$\mathbb{R}$-action induced by $\Phi$ will denoted by  $(x,t)\longrightarrow xt$ (the case of interest is when $M$ is compact, which implies the flow's completeness.) 

\end{remark}

\subsection{Asymptotic cycles on smooth manifolds}

Let $M$ be a compact manifold, and $C=C^\infty(M,\mathbb{S}^1)$ be the set of all smooth functions $f:M\longrightarrow \mathbb{S}^1$, on which the group structure is given by $(f\cdot g)(x)= f(x)\cdot g(x)$. Set $R$ as the following subgroup of $C$  
	\[
\{f\in C; f(x)=\exp{(2\pi i H(x))}, H:M\longrightarrow \mathbb{R} \text{ smooth}\} .		\]
Every element $f\in C$ is given locally by $f(x)=\exp 2\pi i H(x)$, where $H(x)$ is some smooth real function determined up to an additive constant. If $H_1$ and $H_2$ are determined as above in two different coordinate systems, $dH_1$ and $dH_2$ agree in the region  of overlapping. Then, for each $f\in C$, there is a closed 1-form $\omega_f$ given locally by $dH$. Consider the local parametrization of the circle given by $\theta(\exp{(2\pi it)})=t$, and the usual volume form on $\mathbb{S}^1$ described locally by $d\theta$. Let $\alpha:I\longrightarrow M$ be a smooth curve with  $X(x)=\partial_t|_0\alpha(t)$. If $f=\exp 2\pi i H(x)$ in a neighborhood of $\alpha(0)$, then it is easily seen that 
\begin{align*}
f^*d\theta(X)=\omega_f(X).
\end{align*}
Therefore $\omega_f=f^*d\theta$. In fact, the association $f\longrightarrow \omega_f$ give us  
	\[
[f]\in C/R\longrightarrow [\omega_f]\in H^1_{DR}(M;\mathbb{Z}).
	\]
a isomorphism.

\begin{definition}
A point $x\in M$ is called \textit{quasi-regular} provided 
	\[
\lim_{T\longrightarrow\infty}1/T\int_0^Tf(xt)dt
	\]
exists for every real-valued continuous function defined on $M$.
\end{definition}

\begin{remark}
The symbol $f^{'}(x)$  will  denote the derivative of $f$   in the $X(x)$-direction, that is, 
\begin{align}\label{f linha}
f^{'}(x)={\partial_t}|_{0}f(xt)=f_*(X(x)).
\end{align}
\end{remark}

\begin{definition}
Let $x$ be a quasi-regular point. The \textit{asymptotic cycle} associated with $x$ is defined as being the linear extension of $A_x:H^1(M;\mathbb{Z})\longrightarrow \mathbb{R}$ to $H^1(M;\mathbb{R})$, where $A_x$ is given by
	\[
A_x[ f]= \lim_{T\longrightarrow \infty}\frac{1}{2\pi i T}\int_{0}^{T}  \frac{f^{'}(xt)}{f(xt)}dt.
	\]
\end{definition}

\begin{definition}
A finite Borel measure $\mu$ on $M$ is said to be \textit{invariant} concerning the flow $\Phi$ provided $\int_M (f(xt)-f(x))d\mu(x)=0$ for all continuous functions $f:M\rightarrow \mathbb{R}$ and all $t\in \mathbb{R}$.
\end{definition}

\begin{definition}
The asymptotic cycle associated with an invariant measure $\mu$ is the homological class (in De Rham sense \cite[\S 18]{de1984varietes}) determined by the continuous  linear functional (\textit{current}) $A_\mu:\mathcal{D}_1\longrightarrow \mathbb{R}$ defined by
	\[
A_\mu(\omega)=\int_M\omega(X)d\mu.
	\]
\end{definition}
It is easily seen that $\partial A_\mu=0$ (that is, $\partial A_\mu$ is a cycle) hence determine an integral homological class that will be denoted too by $A_\mu$,  $A_\mu:H^1_{DR}(M;\mathbb{R})\longrightarrow \mathbb{R}$

\begin{theorem}
Let $\mu$ be an invariant measure concerning a flow $\Phi$. Let
$A^{'}_\mu:C\longrightarrow \mathbb{R}$ defined by  $A^{'}_\mu([f])=\int_MA_x[f]d\mu(x)$. Then the natural linear extension of $A^{'}_\mu$ to $H_{DR}^1(M;\mathbb{R})$ and $A_\mu$ coincides. 
\end{theorem}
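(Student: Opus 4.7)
The plan is to verify the equality $A'_\mu([f]) = A_\mu([\omega_f])$ on every element $[f] \in C/R$; since $[f] \mapsto [\omega_f]$ realizes the isomorphism $C/R \cong H^1_{DR}(M;\mathbb{Z})$ and both functionals are $\mathbb{R}$-linear, this coincidence on integral classes automatically determines the agreement of their natural linear extensions on $H^1_{DR}(M;\mathbb{R}) = H^1_{DR}(M;\mathbb{Z}) \otimes \mathbb{R}$. Hence everything reduces to a single computation at the level of a representative $f \in C$.

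The first step is to recognize $A_x[f]$ as a Birkhoff time average. Writing $f = \exp(2\pi i H)$ on a small neighborhood of a point, differentiation in the $X$-direction gives $f'(x) = 2\pi i\,dH(X(x))\,f(x)$, so that
\[
\frac{1}{2\pi i}\,\frac{f'(x)}{f(x)} \;=\; dH(X(x)) \;=\; \omega_f(X)(x).
\]
The right-hand side is a globally defined continuous function on $M$ (independent of the choice of local primitive $H$, since $\omega_f$ is global). Setting $g := \omega_f(X)$, the definition of the asymptotic cycle becomes
\[
A_x[f] \;=\; \lim_{T\to\infty}\frac{1}{T}\int_0^T g(xt)\,dt,
\]
the classical Birkhoff time average of the bounded continuous observable $g$ along the orbit of $x$.

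Next I would apply the continuous-time Birkhoff ergodic theorem to the flow-invariant finite Borel measure $\mu$ and to the function $g$. This yields the $\mu$-almost everywhere existence of $\bar g(x) := \lim_{T\to\infty}\tfrac{1}{T}\int_0^T g(xt)\,dt$ and, crucially, the identity $\int_M \bar g\,d\mu = \int_M g\,d\mu$. Combining with the previous step,
\[
A'_\mu([f]) \;=\; \int_M A_x[f]\,d\mu(x) \;=\; \int_M \bar g\,d\mu \;=\; \int_M \omega_f(X)\,d\mu \;=\; A_\mu(\omega_f),
\]
which is precisely what we wanted. A slightly more elementary route, avoiding Birkhoff, would apply Fubini first: by $\Phi$-invariance of $\mu$, for every fixed $T>0$ one has $\int_M \tfrac{1}{T}\int_0^T g(xt)\,dt\,d\mu(x) = \int_M g\,d\mu$, and then dominated convergence (with $\|g\|_\infty$ as majorant, $M$ being compact) allows passing the limit inside the outer integral.

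The only delicate point is the interchange of limit and integration, which is absorbed by the ergodic theorem (or by dominated convergence after Fubini); the rest is a straightforward unpacking of the definitions, so I do not anticipate any substantive obstacle beyond being careful that the set of quasi-regular points relative to $g$ is $\mu$-conull, which again is guaranteed by the ergodic theorem.
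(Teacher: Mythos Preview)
Your proof is correct and follows essentially the same path as the paper: both reduce the claim to the identity $\frac{1}{2\pi i}\frac{f'(x)}{f(x)}=\omega_f(X)(x)$ and then conclude $A'_\mu([f])=\int_M\omega_f(X)\,d\mu=A_\mu(\omega_f)$. The only difference is in how the interchange of the time-limit with the $\mu$-integral is justified: the paper simply invokes Schwartzman's theorem (page~274 of \cite{schwartzman1957asymptotic}) to obtain $A'_\mu([f])=\frac{1}{2\pi i}\int_M\frac{f'(x)}{f(x)}\,d\mu$ as a black box, whereas you supply the argument yourself via the Birkhoff ergodic theorem (or, equivalently, Fubini plus dominated convergence), making your version more self-contained.
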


\begin{proof}
By \cite[Theorem on pag. 274]{schwartzman1957asymptotic}, we have 
    \[
A^{'}_\mu([f])=\frac{1}{2\pi i }\int_M \frac{f^{'}(x)}{f(x)}d\mu.
    \]
Expressing $f$ locally by $f(x)=\exp 2\pi iH(x)$, we have 
    \[
f^{'}(x)=2\pi iH_*(\partial_t|_0(xt))f(x).
    \]
Hence 
    \[
f^{'}(x)/(2\pi i f(x))=\partial_t|_0H(xt)=\omega_f(X(x)).
    \]
It follows that
    \[
A^{'}_\mu([f])= \int_M \omega_f(X(x))d\mu=A_\mu(\omega_f).
    \]
\end{proof}

\begin{remark}
Given a quasi-regular point $x$, there exists a unique measure $\mu_x$ such that 
	\[
\lim_{T\longrightarrow\infty}1/T\int_0^Tf(xt)dt=\int_Mf(y)d\mu_x(y)
	\]
for every continuous function $f$ \cite[\S 2]{schwartzman1957asymptotic}. It follows that 
    \[
A_x(f)=\lim_{T\rightarrow \infty}\frac{1}{2\pi iT}\int_0^T\frac{f^{'}(xt)}{f(xt)}dt=\lim_{T\rightarrow \infty}\frac{1}{ T}\int_0^T\omega_f(X)dt=
    \]
    \[
\int_M\omega_f(X)(y)d_{\mu_x}(y)=A_{\mu_x}(\omega_f),    
    \]
hence $A_x=A_{\mu_x}$.
\end{remark}

\begin{lemma}\label{canonical asymptotic cycle associated with a volume-preserving flow}
Let $M$ be a closed orientable manifold. Let   $\Omega$ be a volume form on $M$ and  $X$ be a smooth vector field on $M$ preserving $\Omega$. Set $\omega=i_X\Omega$ and $\mu$ the measure determined by $\Omega$ , i.e., the measure which in an open set $B$ is given by $\mu(B)=\int_B\Omega$.  Then $\mu$ is invariant by the flow determined by $X$ and $A_\mu=[\omega]$, $\omega$ viewed as a \textit{diffuse current}\footnote{A $k$-form $\omega$ determines a current, named \textit{diffuse current}, $\omega:\mathcal{D}_{n-k}\longrightarrow\mathbb{R}$, given by $\omega(\eta)=\int_M\omega\wedge \eta$.}. In particular, if $\omega$ determines a nonzero cohomology class, then $A_\mu\neq 0$ in $H_1^{DR}(M)$.
\end{lemma}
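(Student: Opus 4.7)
The plan is to handle the three assertions of the lemma in order---first the invariance of $\mu$, then the cohomological identity $A_\mu=[\omega]$, and finally the non-vanishing corollary---each by a short self-contained argument.

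First, I would verify invariance of $\mu$. Because $X$ preserves $\Omega$ (i.e.\ $\mathcal{L}_X\Omega=0$), the flow satisfies $\Phi_t^*\Omega=\Omega$ for every $t\in\mathbb{R}$, so a change of variable along the diffeomorphism $\Phi_t$ gives
\[
\int_M f(xt)\,d\mu(x) \;=\; \int_M \Phi_t^*(f\,\Omega) \;=\; \int_M f\,d\mu
\]
for every continuous $f$. This is exactly the defining property of a $\Phi$-invariant measure.

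Next, I would test the current $A_\mu$ against an arbitrary smooth 1-form $\eta$. The essential input is a pointwise identity obtained by applying the degree-$(-1)$ derivation $\iota_X$ to the identically vanishing $(n+1)$-form $\eta\wedge\Omega$:
\[
0 \;=\; \iota_X(\eta\wedge\Omega) \;=\; \eta(X)\,\Omega \;-\; \eta\wedge\iota_X\Omega \;=\; \eta(X)\,\Omega \;-\; \eta\wedge\omega,
\]
so $\eta\wedge\omega=\eta(X)\,\Omega$. Integrating yields $A_\mu(\eta)=\int_M\eta(X)\,d\mu=\int_M\eta\wedge\omega$, which (up to the standard graded sign $(-1)^{n-1}$) is precisely the value at $\eta$ of the diffuse current associated with $\omega$. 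Since this holds for every 1-form $\eta$, the currents $A_\mu$ and $\omega$ agree in $\mathcal{D}_1$ up to sign and thus determine the same homology class in $H_1^{DR}(M;\mathbb{R})$.

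For the last assertion I would invoke de Rham's duality theorem, which guarantees that the diffuse-current assignment descends to a Poincar\'e-duality isomorphism $H^{n-1}_{DR}(M)\xrightarrow{\cong} H_1^{DR}(M;\mathbb{R})$; consequently $[\omega]\ne 0$ forces $A_\mu\ne 0$. The only genuinely non-routine step is the pointwise identity $\eta\wedge\omega=\eta(X)\,\Omega$, which is the hub linking $A_\mu$ (defined via the flow) to $[\omega]$ (defined via the wedge product); once it is in hand, both the invariance check and the non-vanishing corollary reduce to bookkeeping and a standard citation.
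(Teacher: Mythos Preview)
Your proof is correct and follows essentially the same path as the paper: both hinge on the pointwise identity $\eta(X)\,\Omega=\eta\wedge\omega$ (obtained from $\iota_X(\eta\wedge\Omega)=0$) to identify $A_\mu$ with the diffuse current $\omega$, and both close with de~Rham duality for the non-vanishing statement. The only notable difference is in the invariance step: you argue directly via $\Phi_t^*\Omega=\Omega$ and a change of variables, whereas the paper first specializes the wedge identity to $\eta=df$, applies Stokes to get $\int_M df(X)\,d\mu=0$, and then uses Fubini on $\int_M(f(xt)-f(x))\,d\mu=\int_0^t\int_M df(X(xs))\,d\mu\,ds$; your route is shorter and avoids the Fubini detour, while the paper's version has the minor aesthetic advantage of deriving everything (invariance included) from the single wedge identity.
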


\begin{proof}
Let  $\eta$  be a 1-form on $M$. Using the identity  $i_X(\eta\wedge\Omega)=i_X\eta\wedge\Omega+\eta\wedge i_X\Omega$ and the definition of $\mu$, one has

\begin{align*}
\int_M\eta(X)d\mu=\int_M\eta(X)\Omega=\\
\int_Mi_X\eta\wedge\Omega=\\
\int_M(i_X(\eta\wedge\Omega)+\eta\wedge i_X\Omega)=\\
\int_M\eta\wedge\omega.
\end{align*}
Thus, since $d(f\omega)=df\wedge\omega-fd\omega$ and $d\omega=0$, it follows from Stokes's theorem that  
	\[
\int_Mdf(X)d\mu=\int_Mdf\wedge\omega=\int_Md(f\omega)=0
	\]
for all $f\in C^\infty(M)$. Using this, we will prove that $\mu$ is invariant.
 Let $t\in \mathbb{R}$ and $f\in C^\infty(M)$. By Fubini's theorem

\begin{align*}
\int_M(f(xt)-f(x))d\mu(x)=\int_M\left(\int_0^tdf(X(xs))ds\right)d\mu(x)=\\
\int_0^t\left(\int_Mdf(X(xs))d\mu(x)\right)ds=0.
\end{align*}
Hence, since $\int_M(f(xt)-f(x))d\mu(x)=0$ for every $t\in \mathbb{R}$ and $f\in C^\infty(M)$, it follows that $\mu$ is invariant. Therefore, have sense $A_\mu$ and 
	\[
A_\mu(\eta)=\int_M\eta(X)d\mu=\int_M\eta\wedge\omega =\omega(\eta),
	\]
where $\omega$ is seen as a diffuse current. Now, by De Rham' theory of currents, the form $\omega$ determine a nonzero cohomology class if and only if, as a diffuse current, determine a nonzero homological class. The proof is completed.
\end{proof}

\begin{lemma}\label{asymptotic cycles of periodic points}Let $M$ be a manifold, $\Phi$ be a continuous flow on $M$, and $x \in M$ a periodic point  concerning $\Phi$ \footnote{The period function, $\lambda:M\longrightarrow\mathbb{R}\cup\{\infty\}$,  is defined by $\lambda(x)=\inf_{t\geq 0}\{ xt=x\}$. A point is said to be periodic provided $\lambda(x)\neq \infty.$}. Then $x$ is quasi-regular and $\lambda(x) A_x=[C_x]$, where $[C_x]$ represents the integral homological class determined by the orbit of $x$ under $\Phi$.
\end{lemma}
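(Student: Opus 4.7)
The plan is to reduce both conclusions to a single observation: since $t\mapsto xt$ is $\lambda$-periodic with $\lambda:=\lambda(x)$, the time average along the orbit of any continuous function converges to its average over one period. Indeed, for continuous $g:M\to\mathbb{R}$, decomposing $T=n\lambda+r$ with $0\le r<\lambda$ and using that $g$ is bounded on the compact orbit yields
$$
\lim_{T\to\infty}\frac{1}{T}\int_0^T g(xt)\,dt\;=\;\frac{1}{\lambda}\int_0^\lambda g(xt)\,dt,
$$
which establishes quasi-regularity of $x$.

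To compute $A_x[f]$, I would rewrite the integrand $f'(xt)/f(xt)$ in terms of $\omega_f$. Writing $f=\exp(2\pi i H)$ locally and using (\ref{f linha}), one gets $f'/f=2\pi i\,dH(X)=2\pi i\,\omega_f(X)$. Although $H$ is only defined locally, the form $\omega_f$ is global, so $\omega_f(X(xt))$ is an unambiguous continuous function of $t$. Applying the periodic-averaging limit above with $g(y)=\omega_f(X(y))$ gives
$$
A_x[f]\;=\;\lim_{T\to\infty}\frac{1}{T}\int_0^T\omega_f(X(xt))\,dt\;=\;\frac{1}{\lambda}\int_0^\lambda\omega_f(X(xt))\,dt\;=\;\frac{1}{\lambda}\int_{C_x}\omega_f.
$$

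Finally, $\int_{C_x}\omega_f=\langle[\omega_f],[C_x]\rangle$ is the De Rham pairing on the oriented loop $C_x$, so $\lambda\,A_x[f]=\langle[\omega_f],[C_x]\rangle$ for every $[f]\in C/R$. Via the isomorphism $C/R\cong H^1_{DR}(M;\mathbb{Z})$ recalled earlier, followed by the prescribed linear extension to $H^1_{DR}(M;\mathbb{R})$, this is precisely the identity $\lambda(x)A_x=[C_x]$ in $H_1^{DR}(M;\mathbb{R})$. The only delicate step is the passage from the local primitive $H$ to the global form $\omega_f$; once that patching is observed, the argument is just definition-chasing plus periodic averaging.
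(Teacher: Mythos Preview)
Your proof is correct and follows essentially the same route as the paper: periodic averaging to get quasi-regularity, the rewriting $f'/f = 2\pi i\,\omega_f(X)$ via a local primitive, and identification of the one-period integral $\int_{C_x}\omega_f$ with the pairing $\langle[\omega_f],[C_x]\rangle$. The only item the paper adds is a one-line disposal of the fixed-point case (where $\lambda(x)=0$ and both sides vanish), which is trivial and implicit in your setup.
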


\begin{proof}
If $x$ is a fixed point, the assertion follows. Suppose  $x$ is not a fixed point. For each smooth function $f:M\longrightarrow \mathbb{R}$, it is easily seen that

\begin{align*}
\lim_{t\longrightarrow\infty}1/T\int_0^Tf(xt)dt=1/\lambda(x)\int_0^{\lambda(x)} f(xt)dt.
\end{align*}
Therefore every periodic point $x\in M$ is quasi-regular. Set $\alpha:[0,T]\longrightarrow M$ given by $\alpha(t)= xt$. Let $f\in C$. Then
	\[
A_x[f]=
\lim_{T\longrightarrow \infty}\frac{1}{2\pi i T}\int_{0}^{T}  \frac{f^{'}(xt)}{f(xt)}
	\]
	\[
=\lim_{T\longrightarrow \infty}\frac{1}{2\pi i T}\int_{0}^{T}  2\pi i\left(\omega_f(\alpha^{'}(t))dt\right)
	\]
	\[
=\lim_{T\longrightarrow \infty}\frac{1}{ T}\int_{\alpha([0,T])}  \omega_f
	\]
	\[
=1/\lambda(x)\int_{\alpha [0,\lambda(x)]} \omega_f.
	\]
 Parametrizing the orbit of $x$ by $\alpha$, and denoting by $[C_x]$   the integral homological class determined by the $x$-orbit, we have
	\[
[C_x]([f])= \int_{\alpha[0,\lambda(x)]}\omega_f=\lambda(x) A_x[f].
	\]
Therefore, 
	\[
\lambda(x) A_x= [C_x]
	.\]
\end{proof} 

\begin{theorem}\label{Schwartzman}
A smooth flow $\Phi$ on a closed manifold $M$ admits a cross-section if, and only if, for every invariant measure $\mu$ the homological class determined by $A_\mu$ is nonzero.
\end{theorem}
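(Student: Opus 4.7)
The plan is to translate cross-section existence into a convex separation problem for the cone of asymptotic cycles, following Schwartzman. The main tools are the identification $C/R \cong H^1_{DR}(M;\mathbb{Z})$ together with the identity $A_\mu(\omega_f) = \int_M \omega_f(X)\, d\mu$ established in the preceding lemmas.

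For the necessary direction, assume $\Phi$ admits a cross-section $\Sigma \subset M$. Using the first-return map I would construct a smooth submersion $f: M \to \mathbb{S}^1$ with $f^{-1}(\mathrm{pt}) = \Sigma$; since $X$ is transverse to every level set, its associated form $\omega_f = f^* d\theta$ satisfies $\omega_f(X) > 0$ pointwise. Then for any invariant probability measure $\mu$,
\[
A_\mu([\omega_f]) \;=\; \int_M \omega_f(X)\, d\mu \;>\; 0,
\]
so $A_\mu \neq 0$ in $H_1(M;\mathbb{R})$.

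For the sufficient direction, let $\mathcal{M}$ denote the space of $\Phi$-invariant Borel probability measures on $M$. It is convex and weak-$*$ compact, and $\mu \mapsto A_\mu$ is an affine continuous map into $H_1(M;\mathbb{R})$, with convex compact image $K$. The hypothesis forces $0 \notin K$: if $A_{\mu_1} = -A_{\mu_2}$ for $\mu_1, \mu_2 \in \mathcal{M}$, then $\tfrac{1}{2}(\mu_1+\mu_2) \in \mathcal{M}$ would have vanishing asymptotic cycle, contradicting the hypothesis for that measure. By Hahn--Banach there is a class in $H^1(M;\mathbb{R})$ strictly positive on $K$; compactness of $K$ and density of the rational lattice allow me to choose an integral class $c \in H^1(M;\mathbb{Z})$ with the same property. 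Under the isomorphism $C/R \cong H^1_{DR}(M;\mathbb{Z})$, this class is represented by a smooth map $f: M \to \mathbb{S}^1$ whose associated closed form $\omega_f$ satisfies $\int_M \omega_f(X)\, d\mu > 0$ for every $\mu \in \mathcal{M}$.

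The main obstacle is to upgrade the on-average positivity $\int_M \omega_f(X)\,d\mu > 0$ to pointwise positivity $\omega_f(X) > 0$ on all of $M$, for then $f$ is a submersion and any level set $f^{-1}(\theta_0)$ provides the desired cross-section. To achieve this I would replace $\omega_f$ by a cohomologous representative $\omega_f + dh$ with $h \in C^\infty(M)$ chosen via a time-averaging procedure: weak-$*$ compactness of $\mathcal{M}$ yields a uniform lower bound $\inf_{\mu \in \mathcal{M}} \int_M \omega_f(X)\, d\mu > 0$, and a duality argument against the subspace $\{X(h) : h \in C^\infty(M)\} \subset C(M)$, whose annihilator in $C(M)^*$ is precisely the span of $\mathcal{M}$, lets one decompose $\omega_f(X)$ as a strictly positive continuous function plus $X(h)$ for some smooth $h$. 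Once such a cohomologous representative with $\omega_f(X) > 0$ everywhere is secured, the cross-section follows immediately from any regular value of the corresponding circle-valued map.
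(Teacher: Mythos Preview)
Your proposal is correct and, in fact, more complete than the paper's sketch, which treats only the sufficiency direction. The overall architecture---compactness of invariant measures, Hahn--Banach separation, passage to an integral class, then a map to $\mathbb{S}^1$---is the same, but the implementations differ in two places. First, the paper carries out the separation in the infinite-dimensional space of $1$-currents $\mathcal{D}_1'$, separating the set $\{A_\mu\}$ from the subspace $\mathcal{B}_1$ of boundaries, and then invokes Schwartz's reflexivity theorem to identify the separating functional with a closed $1$-form; you work directly in the finite-dimensional space $H_1(M;\mathbb{R})$, which avoids current theory and reflexivity entirely. Second, and more substantively, the paper stops once it has an $f:M\to\mathbb{S}^1$ with $A_\mu([\omega_f])>0$ for every invariant $\mu$ and simply cites Schwartzman's original theorem for the passage to an actual cross-section; you supply that step yourself, via a second Hahn--Banach argument in $C(M)$ against the subspace $\{X(h):h\in C^\infty(M)\}$ to produce a cohomologous representative with $\omega_f(X)+X(h)>0$ pointwise. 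Your route is thus more self-contained and elementary; the paper's is shorter but leans on heavier machinery and an external black box. One minor redundancy: since $\mu\mapsto A_\mu$ is affine and $\mathcal{M}$ is convex, the image $K$ consists exactly of the $A_\mu$, so $0\in K$ would directly mean $A_\nu=0$ for some $\nu\in\mathcal{M}$; your detour through $A_{\mu_1}=-A_{\mu_2}$ is unnecessary, though harmless.
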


\begin{proof}[Proof's sketch] 
The set $\mathcal{C}=\{A_\mu; \mu \text{ invariant measure}\}$ is   convex and weakly compact. Since $\mathcal{B}_1$ (the boundary subspace) is  weakly-closed and $\mathcal{C}\cap \mathcal{B}_1=\emptyset$, by Hahn-Banach Theorem there exists a   continuous linear functional $\varphi: \mathcal{D}_1^{'}\longrightarrow\mathbb{R}$ such that 
	\[
\varphi\vert_{\mathcal{C}}>0\text{ and }\phi|_{\mathcal{B}_1}=0
	.\]
	Due to a famous theorem of Laurent Schwartz \cite{schwartzman1957asymptotic}, we know that the space of currents is reflexive. Hence, there exists an 1-form $\eta$ such that $\varphi=\eta.$ Therefore
	\[
d\eta(\phi)=\eta(\partial\phi)=\varphi(\partial\phi)=0,
	\]
concluding that $\eta $ is closed. The positivity condition of $\varphi$ on $\mathcal{C}$ implies that   
	\[
A_\mu(\eta)=\eta(A_\mu)=\varphi(A_\mu)>0
	\]
for every invariant measure $\mu$. Now, the set of all invariant measures is weakly compact.  By  Tischler's argument \cite{tischler41fibering}, $\eta$ can be approximated in $\mathcal{D}_1$ by forms with integral periods (we need a small change in the argument to obtain an approximation in $\mathcal{D}_1)$). It follows that there exists a form $\omega$ with integral periods such that $\omega(A_\mu)>0$ for all invariant measure $\mu$. Since $\omega=f^*d\theta$ for some smooth function $f:M\longrightarrow \mathbb{S}$, it follows from \cite[Theorem pag. 281]{schwartzman1957asymptotic} that $\Phi$ admits a global cross-section.
\end{proof}


\section{The class of $k$-forms of rank $k$}

Let $\omega$ be a $k$-form on a $n$-dimensional manifold $M$. We will denote by $\ker\omega$ the distribution on $M$ given by
    \[
x\rightarrow \ker\omega_x=\{v\in T_xM; i_v\omega=0\}.
    \]
The \textit{rank} of $\omega$ at a point $x\in M$ is defined by $r_x=n-\dim\ker_x$. A form $\omega$ is said to be constant rank if $r_x$ is independent of $x$. 
 Suppose $\omega$ be a closed form of rank $k$. The interior product with respect to the commutator of two vector fields satisfies the identity $i_{[X,Y]}=[\mathcal{L}_X,i_Y]$.  Given $X,Y\in\ker\omega$, we have
    \[
i_{[X,Y]}\omega=\mathcal{L}_X i_Y\omega - i_Y\mathcal{L}_X\omega=0,
        \]
 since $i_X\omega=i_Y\omega=0$, $d\omega=0$ and $\mathcal{L}_X=di_X+i_Xd$ (Cartan's magic formula). It follows from Frobenius' Theorem that $\ker\omega$ is tangent to a foliation $\mathcal{F}_\omega$ of codimension $k$.

\begin{theorem}\label{complementary foliation given by closed form} Let $M$ be a manifold with a closed $k$-form $\omega $ of rank $k$. Then $\omega$ is I.H. if, and only if, there exists a closed form $\eta$ of rank $q=n-k$ such that $\ker\omega  \cap \ker\eta  = \{0\}$.
\end{theorem}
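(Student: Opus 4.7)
The plan is to prove both directions by explicitly connecting the Hodge star operator to the transverse form $\eta$. The unifying idea is that, under the rank hypothesis, the pointwise decomposition $T_xM = V_x \oplus W_x$ with $V_x$ complementary to $\ker\omega_x$ and $W_x = \ker\omega_x$ determines $\omega$ up to a scalar, and similarly for $\eta$.

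For the forward direction, I would simply take $\eta = *\omega$, where $*$ is the Hodge star of a metric $g$ for which $\omega$ is harmonic. Since $\omega$ is closed, harmonicity forces $d^{*}\omega = 0$, and combined with the formula $d^{*} = \pm *d*$ this gives $d(*\omega) = 0$, so $\eta$ is closed. For the pointwise check, at each $x$ I would pick an oriented $g$-orthonormal basis $e_1,\dots,e_n$ such that $\ker\omega_x = \operatorname{span}(e_{k+1},\dots,e_n)$; then $\omega_x = c\, e^1\wedge\cdots\wedge e^k$ with $c\neq 0$, so $*\omega_x = c\, e^{k+1}\wedge\cdots\wedge e^n$, which has rank $n-k$ and kernel $\operatorname{span}(e_1,\dots,e_k)$, manifestly complementary to $\ker\omega_x$.

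For the converse, I would build a metric $g$ from $\omega$ and $\eta$ so that $*\omega = \eta$, which immediately yields $d^{*}\omega = \pm *d\eta = 0$ and hence harmonicity. Using the hypothesis that $\ker\omega$ is a rank $n-k$ smooth distribution (it is tangent to the foliation $\mathcal{F}_\omega$ constructed just before the theorem), transversality with the smooth rank-$k$ distribution $\ker\eta$ gives a smooth splitting $TM = E_1\oplus E_2$ with $E_1 = \ker\eta$ and $E_2 = \ker\omega$. The form $\omega$ restricts to a nowhere-zero section of $\Lambda^k E_1^{*}$, orienting $E_1$; similarly $\eta$ orients $E_2$. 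Starting from any auxiliary metric on each summand, I would rescale by the unique smooth positive factor $f^{2/k}$ (respectively $h^{2/(n-k)}$) that forces the induced volume form on $E_1$ to coincide with $\omega|_{E_1}$ and the one on $E_2$ to coincide with $\eta|_{E_2}$, and then declare $E_1 \perp E_2$. In a $g$-orthonormal oriented frame adapted to the splitting, one then reads off $\omega = e^1\wedge\cdots\wedge e^k$, $\eta = e^{k+1}\wedge\cdots\wedge e^n$, and directly $*\omega = \eta$.

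The main obstacle I anticipate is the global consistency of the rescaling and orientations in the converse: one must check that $E_1$ and $E_2$ are genuine smooth subbundles (so that both $\omega|_{E_1}$ and $\eta|_{E_2}$ are smooth nowhere-zero top forms on them), that the orientations induced by $\omega$ and $\eta$ on the two summands assemble into a single orientation on $M$ compatible with the chosen $\Omega_g$, and that with this choice of orientation one really gets $*\omega = +\eta$ rather than $-\eta$ (the latter sign would still imply $d*\omega = 0$, so the argument is robust, but I would spell this out). The forward direction and the pointwise linear-algebra computation are routine; essentially everything reduces to working in the adapted orthonormal frame above.
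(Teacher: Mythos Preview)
Your proposal is correct and follows essentially the same route as the paper. In both directions the key moves coincide: for the forward implication you take $\eta=*\omega$ and read off its kernel in an adapted orthonormal frame, exactly as the paper does; for the converse you take the block-diagonal metric on $TM=\ker\eta\oplus\ker\omega$ and conformally adjust so that $*\omega=\eta$. The only cosmetic difference is that you rescale \emph{both} summands so that $\omega$ and $\eta$ each become the fibrewise volume forms of $\ker\eta$ and $\ker\omega$, whereas the paper fixes arbitrary metrics $g_1,g_2$, observes that then $*\omega=s\,\eta$ for some smooth function $s$, and performs a single conformal change $g_f=f g_1\oplus g_2$ with $f=s^{2/(k-n)}$ to force $*_{g_f}\omega=\eta$. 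Your symmetric normalization is slightly cleaner to state; the paper's single rescaling is slightly more economical. Your remarks about orientations and the possible global sign in $*\omega=\pm\eta$ are well taken and in fact sharper than the paper, which tacitly assumes an orientation on $M$ throughout.
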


\begin{proof} Let $\omega$ be a closed $k$-form of rank $k$. Suppose that there is on $M$ a Riemannian metric $g$ such that $\omega$ is $g$-harmonic. Then the form  $\eta := *\omega$ is a closed $q$-form with the desired property.
Conversely, let $\eta$ be a $q$-form with the property above. Choose Riemannian metrics $g_1,\ g_2$ on the bundles  $\ker\omega$\ and $\ker\eta$ and let $g$ be the Riemannian metric on $M$ which is the orthogonal sum of $g_1$ and $g_2$. The forms $*\omega$ and $\eta$ are volume forms in $\ker\eta^{\perp}$ and have the same nullity space. Hence
$$*\omega = s\eta$$
where $s: M \longrightarrow \mathbb{R}$ is a smooth function. We have
$$\omega \wedge *\omega =  \|\omega\|^2 \Omega,\ \ \ \ \omega \wedge \eta = r\Omega,$$
where $\Omega$ is the volume form of $g$, and $r: M \longrightarrow \mathbb{R}$\ is a smooth function. It follows that
	\[
\|\omega\|^2\Omega = \omega \wedge *\omega =\omega \wedge s \eta = sr\Omega.
	\]
Therefore $sr = \|\omega\|^2$. Let $\{E_i\} $ and $\{ F_j\}, i = 1, \ldots q,\ j =1, \ldots, k$\ be orthonormal bases for $\ker\omega,\ \ker\eta$, respectively, and let $f: M \longrightarrow \mathbb{R}$ be a positive function. Setting $E'_i
= f^{\frac{-1}{2}}E_i$, then $\{E'_i, F_j\}$\ is an orthonormal basis with respect to the metric tensor
	\[
g_f = fg_i \oplus g_2.
	\]
We look for a solution $f$ to
\begin{equation}\label{semiconformal}*_{g_f}\omega = \eta.
\end{equation}
If $f$\ is such a solution,\ $\omega$\ is harmonic with respect to $g_f$,\ since $\eta$\ is closed, and we have:

	\[
\omega(F_1, \ldots, F_k) = *_{g_f}\omega(E'_1, \ldots, E'_q) = \eta(E'_1, \ldots, E'_k) = f^{\frac{p-n}{2}}\eta(E_1, \ldots, E_k).	\]
Hence
	\[
f = (\omega(F_, \ldots, F_k)/\eta(E_1, \ldots, E_k))^{\frac{2}{k-n}} = s^{\frac{2}{k-n}}
	\]
is a solution of (\ref{semiconformal}).
\end{proof}

\begin{definition}\rm{
A $k$-form $\omega$ on $M$ is said to be \textit{transitive} at $x\in M$ if $x$ is contained in a closed embedded smooth $k$-dimensional submanifold $\Sigma \subset M$, to which $\omega$ restricts to a volume form\footnote{In other words, the restriction $\omega|_\Sigma$ is a top degree form without zeros. We usually denote this simply by writing $\omega|_\Sigma>0$.}. The form $\omega$ is said to be transitive if it is transitive at every $x\in M$ where it does not vanish.}
\end{definition}

\begin{lemma}(Calabi's argument)\label{Calabi's argument}
 Let $M$ be a manifold and $\omega$ be a  nowhere-vanishing $k$-form  on $M$. Suppose that every $x\in M$ for which  $\omega_x\neq 0$ is contained in a closed $k$-dimensional submanifold $\Sigma$ with normal trivial bundle and such that $\omega|_\Sigma>0$.  Then there exists a closed $(n-k)$ form $\psi$ such that $\omega\wedge\psi>0$ everywhere.
\end{lemma}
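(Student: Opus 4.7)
I would construct $\psi$ as a finite sum of closed ``tubular'' forms, one for each $\Sigma$ in a judiciously chosen cover of $M$ (assuming $M$ is a closed manifold, as in all applications in this paper, so that the $\Sigma$'s are automatically compact and finite covers are available). Fix $x\in M$. By hypothesis there is a topologically closed $k$-submanifold $\Sigma_x\ni x$ with $\omega|_{\Sigma_x}>0$ and trivial normal bundle. The tubular neighborhood theorem then yields a diffeomorphism $\Phi_x\colon\Sigma_x\times D^{n-k}\to U_x\subset M$ with $\Phi_x(\sigma,0)=\sigma$; let $\pi_x\colon U_x\to D^{n-k}$ be the projection onto the normal factor. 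Pick a non-negative bump $\alpha_x\colon D^{n-k}\to[0,\infty)$ with $\alpha_x(0)>0$, compactly supported in a small ball around the origin, and set
\[
\tilde\psi_x = \pi_x^*\bigl(\alpha_x\,dy^1\wedge\cdots\wedge dy^{n-k}\bigr),
\]
extended by zero outside $U_x$. Being the pullback of a top-degree form on $D^{n-k}$, $\tilde\psi_x$ is automatically closed, and the compact support of $\alpha_x$ in normal directions makes the zero extension smooth.

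The key local computation is that, in product coordinates $(\sigma^1,\ldots,\sigma^k,y^1,\ldots,y^{n-k})$ around a point of $\Sigma_x$,
\[
\omega\wedge\tilde\psi_x\Big|_{(\sigma,y)} = \omega_{(\sigma,y)}(\partial_{\sigma^1},\ldots,\partial_{\sigma^k})\,\alpha_x(y)\,d\sigma^1\wedge\cdots\wedge d\sigma^k\wedge dy^1\wedge\cdots\wedge dy^{n-k},
\]
since every other monomial of $\omega$ involves some $dy^j$ and so kills the top-degree factor $dy^1\wedge\cdots\wedge dy^{n-k}$. Along $y=0$ the coefficient $\omega(\partial_{\sigma^1},\ldots,\partial_{\sigma^k})$ is strictly positive by $\omega|_{\Sigma_x}>0$. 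Covering the compact $\Sigma_x$ by finitely many such product charts and invoking continuity of $\omega$ gives a uniform $\delta>0$ such that this coefficient stays positive throughout $\{|y|<\delta\}$; choosing $\alpha_x$ supported in this set makes $\omega\wedge\tilde\psi_x\ge 0$ everywhere on $M$, with strict positivity on an open set $V_x\ni x$.

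Finally, since $\omega$ is nowhere-vanishing the collection $\{V_x\}_{x\in M}$ covers $M$, so by compactness I extract a finite subcover $V_{x_1},\ldots,V_{x_N}$ and set $\psi:=\sum_{i=1}^N\tilde\psi_{x_i}$. Each summand is closed, hence so is $\psi$; and for every $y\in M$ there exists $i$ with $y\in V_{x_i}$, whence $(\omega\wedge\tilde\psi_{x_i})(y)>0$ while the remaining summands contribute non-negatively, yielding $(\omega\wedge\psi)(y)>0$. The main delicate point is the uniform positivity estimate of the second step: the bump support must be chosen small enough that $\omega$ stays positive on the tangent spaces of \emph{all} horizontal slices of the tube $\Sigma_x\times D^{n-k}$, and it is precisely here that compactness of $\Sigma_x$ (and triviality of its normal bundle, which supplies the product structure) is essential.
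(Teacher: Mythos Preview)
Your proposal is correct and follows essentially the same route as the paper: trivialize the normal bundle of each $\Sigma_x$ to obtain product coordinates, build a closed tubular form by multiplying $dy^1\wedge\cdots\wedge dy^{n-k}$ by a radial bump, verify $\omega\wedge\tilde\psi_x\ge 0$ with strict positivity near $\Sigma_x$ via compactness of $\Sigma_x$, and then sum. The only substantive difference is that the paper does not assume $M$ compact and instead takes a countable locally finite cover $\{V_i\}$ to form $\psi=\sum_i\eta_i$; your extraction of a finite subcover requires compactness of $M$, which, as you note, holds in every application made in the paper.
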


\begin{proof}
 This proof is a generalization of the argument used to the case of $1$ forms due  to Eugenio Calabi \cite{calabi1969intrinsic}. Let $x\in M$ be contained in a closed submanifold $\Sigma$, with normal trivial bundle. Let 
 	\[
\phi:\Sigma\times  \mathbb{D}^{n-k}\longrightarrow V
	 \]
be a diffeomorphism satisfying $\phi(x,0)=x$ for $x\in \Sigma$ (this can be obtained by trivializing the normal bundle of $\Sigma$). Denoting $\phi=(x,y)$, we have 
    \[
\omega_x(\partial x_1,\cdots,\partial x_p)>0
    \]
for $x\in \Sigma$. By compactness,  $\omega\wedge dy>0$  in an open set $V_0$ with $\Sigma\subset V_0\subset V$. Let $\epsilon>0$ such that $V_{2\epsilon}\subset V_0$ and $h:[0,2\epsilon]\longrightarrow \mathbb{R}$ be a smooth function satisfying

\[
\left\{\begin{array}{lll}
h(t)=1 \text{ if }t<\epsilon\\
h(t)=0 \text{ if }t\geq2\epsilon\\
h\geq 0 \text{ everwhere}.
\end{array}\right.
\]
The differential form
	\[
\eta=h(\Vert y\Vert^2)dy
	\]
is closed on $M$ and satisfies $\omega\wedge\eta \geq 0$, $\omega\wedge\eta>0$, where $\eta\neq 0$  and $\omega\wedge\eta=\omega\wedge dy>0$ in $V_\epsilon$. Consider an countable locally finite cover of $M$ by opens sets $\{V_i\}_{i\in I}$, with respective closed forms $\eta_i$ satisfying the three conditions above.
The expression
	\[
\psi=\sum_{i\in I}\eta_i	
	\] 
gives a well-defined closed differential form satisfying $\omega\wedge\eta>0$ in $M$. 
\end{proof}

\begin{remark}\label{transitive p-form of rank k admits transversal forms}
The condition about the normal bundle in the last Lemma can be weakened. Indeed, let $\omega$ be a transitive closed $k$-form of rank $k$. For each point $x\in M$ there exists a closed $k$-dimensional submanifold $\Sigma_x$ containing $x$ such that  $\omega\vert_{\Sigma_x}>0$. It follows that for each $\mathcal{F}_\omega$-holonomy invariant  measure $\mu$, there exists a submanifold transverse to $\mathcal{F}_\omega$ and contained in the support of $\mu$ (if $x$ is in the support of $\mu$, takes $\Sigma_x$). From \cite[Theorem 1.1]{plante1975foliations}, each such $\mu$ determines a nontrivial foliation cycle. It follows from \cite[Theorem I.7]{sullivan1976cycles} that there exists a closed form $\eta$ such that $\omega\wedge \eta$ determines a volume form on $M$, concluding the claim.  

In the intermediate degrees, considering a $k$-form of rank $k$, we cannot conclude from the volume property of $\omega\wedge\eta$   that  $\eta$ has rank equal to $(n-k)$. This illustrates the serious difficulties that we have when discussing forms of degrees strictly between 1 and
$(n-1)$, and clarify why Calabi's argument does not generalize to those forms (a concrete example of a transitive $2$-form on a $4$-manifold which is not I.H., due to Latschev, was presented by Volkov in \cite{volkov2008characterization}.)
\end{remark}


\section{An intrinsic characterization of harmonic nowhere-vanishing $(n-1)$-forms}

 In \cite{honda1997harmonic},    Honda characterized a class of $(n-1)$-forms, possessing only isolated zeros, as being I.H when they are transitive. Hence it is natural to expect that a nowhere-vanishing $(n-1)$-form is intrinsically harmonic, provided it admits a global cross-section, i.e., a closed $(n-1)$-dimensional submanifold everywhere transverse to the flow and cutting every orbit. This section addresses to show this fact. Initially, we present a preliminary characterization whose idea is essentially due to Calabi. 

\begin{theorem}\label{transitive nowhere-vanishing 1 or (n-1)-forms are I.H.}
Let $M$ be an orientable manifold. Suppose that $\omega$ is a nowhere-vanishing transitive closed 1-form (or $(n-1)$-form)  on $M$. Then $\omega$ is I.H.
\end{theorem}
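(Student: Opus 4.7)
The plan is to reduce the claim to Theorem~\ref{complementary foliation given by closed form} by using Calabi's argument to produce a closed complementary form. Since $\omega$ is nowhere vanishing with $k\in\{1,n-1\}$, it is a closed form of rank $k$ everywhere, so it suffices to exhibit a closed form $\psi$ of rank $n-k$ with $\ker\omega\cap\ker\psi=\{0\}$.

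First I would verify the hypotheses of Lemma~\ref{Calabi's argument}. Transitivity provides through each $x\in M$ a closed embedded $k$-dimensional submanifold $\Sigma_x$ on which $\omega$ restricts to a volume form; this orients $\Sigma_x$. Because $M$ is orientable, the normal bundle $N\Sigma_x$ is orientable as well. For $k=1$ the component of $\Sigma_x$ through $x$ is a circle, and an orientable bundle over $S^{1}$ is trivial; for $k=n-1$ the normal bundle is an orientable line bundle over a closed manifold, hence trivial. Lemma~\ref{Calabi's argument} then delivers a closed $(n-k)$-form $\psi$ with $\omega\wedge\psi>0$ on all of $M$.

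Next I would check the rank and transversality conditions required by Theorem~\ref{complementary foliation given by closed form}. In the $1$-form case ($k=1$, $\psi$ an $(n-1)$-form), $\omega\wedge\psi$ being a volume form forces $\psi$ to be nowhere zero, and a nonzero $(n-1)$-form in dimension $n$ has a $1$-dimensional kernel, so $\psi$ has rank $n-1$; if $v\in\ker\omega_x\setminus\{0\}$, expanding
\[
\iota_v(\omega\wedge\psi)=(\iota_v\omega)\wedge\psi-\omega\wedge(\iota_v\psi)=-\omega\wedge(\iota_v\psi)
\]
shows $\iota_v\psi\neq 0$, since the left side, being the contraction of a nowhere-vanishing top form by a nonzero vector, is a nonzero $(n-1)$-form. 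In the $(n-1)$-form case ($k=n-1$, $\psi$ a $1$-form), I would write $\omega=\iota_{X_\omega}\Omega$ as in Lemma~\ref{associated form} and use the Leibniz rule for $\iota_{X_\omega}$ on the degree-$(n+1)$ form $\Omega\wedge\psi=0$ to get
\[
\omega\wedge\psi=(\iota_{X_\omega}\Omega)\wedge\psi=(-1)^{n-1}\psi(X_\omega)\,\Omega,
\]
so $\psi(X_\omega)$ vanishes nowhere; hence $\psi$ is nowhere zero (rank $1$) and its hyperplane kernel misses the line $\ker\omega=\mathbb{R}X_\omega$.

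With $\psi$ closed, of rank $n-k$, and satisfying $\ker\omega\cap\ker\psi=\{0\}$, Theorem~\ref{complementary foliation given by closed form} concludes that $\omega$ is I.H. The principal obstacle is the geometric input: securing the trivial normal bundle required by Calabi's argument, which is exactly where the restriction to degrees $1$ and $n-1$, combined with the orientability of $M$, is decisive. Once $\psi$ has been produced, the transversality verification is the short interior-product calculation above.
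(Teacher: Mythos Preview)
Your proof is correct and follows the same route as the paper: invoke Lemma~\ref{Calabi's argument} (after checking the normal bundle of the transitivity submanifolds is trivial in degrees $1$ and $n-1$), then apply Theorem~\ref{complementary foliation given by closed form}. The paper's argument is terser---it simply asserts the normal bundle triviality and that ``$\omega$ and $\psi$ have the correct rank''---while you spell out both points via the orientable-bundle-over-$S^1$ and orientable-line-bundle observations and the interior-product computations; these are exactly the details the paper suppresses.
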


\begin{proof}
Since  $M$ is an orientable manifold, any closed 1-dimensional or any closed orientable $(n-1)$-dimensional submanifold of $M$ have a normal trivial bundle in $M$. Thus, in the case that $\omega$ is a  transitive 1-form or a transitive $(n-1)$-form, we can apply Lemma \ref{Calabi's argument} to obtain a closed form $\psi$ defined on $M$ such that $\omega\wedge\psi>0$.  The proof is completed by applying Theorem  \ref{complementary foliation given by closed form} observing that, in  both cases, $\omega$ and $\psi$ have the correct rank.  
\end{proof}

The following theorems establish two criteria to decide when a closed nowhere-vanishing $(n-1)$-form is I.H.
 
\begin{theorem}\label{An intrinsic characterization of nowhere-vanishing harmonic (n-1)-forms}
Let $M$ be a closed orientable manifold. A nowhere-vanishing volume-preserving flow defined on $M$ admits global cross-section if and only if the induce closed nowhere-vanishing $(n-1)$-form is I.H.
\end{theorem}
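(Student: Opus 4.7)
The plan is to establish the two implications separately, appealing to Theorem \ref{transitive nowhere-vanishing 1 or (n-1)-forms are I.H.} for the direction ``cross-section $\Rightarrow$ I.H.'' and to Schwartzman's criterion (Theorem \ref{Schwartzman}) for the converse.

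For the forward direction, I would assume a global cross-section $\Sigma$ and argue that $\omega$ is then transitive, after which the cited theorem finishes the job. The first observation is that $\omega|_\Sigma$ is a volume form: transversality of $X$ to $\Sigma$ makes $\{X(p),v_1,\ldots,v_{n-1}\}$ a basis of $T_pM$ whenever $\{v_i\}$ is a basis of $T_p\Sigma$, so
\[
\omega|_\Sigma(v_1,\ldots,v_{n-1}) = \Omega(X(p),v_1,\ldots,v_{n-1}) \neq 0.
\]
Next I would use that $\Phi_t$ preserves both $X$ (tautologically) and $\Omega$ (by hypothesis) and hence preserves $\omega=\iota_X\Omega$; therefore each translate $\Sigma_t:=\Phi_t(\Sigma)$ is a closed embedded $(n-1)$-submanifold on which $\omega$ is again a volume form. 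Since every orbit meets $\Sigma$, every point of $M$ lies in some $\Sigma_t$, so $\omega$ is transitive.

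For the converse, I would assume $\omega$ is $g$-harmonic. Closedness together with $\Delta_g\omega=0$ forces $d^*\omega=0$, so $\eta:=*\omega$ is a closed $1$-form. The crucial step is to show that $\eta(X)$ has constant nonzero sign on $M$: applying $\iota_X$ to the identically vanishing $(n+1)$-form $\eta\wedge\Omega$ and rearranging gives $\eta(X)\,\Omega=\eta\wedge\omega$, while by the defining property of the Hodge star
\[
\eta\wedge\omega = (-1)^{n-1}\omega\wedge *\omega = (-1)^{n-1}\|\omega\|_g^2\,\Omega_g,
\]
and $\|\omega\|_g^2>0$ everywhere because $\omega$ is nowhere-vanishing. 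Comparing the two sides and possibly replacing $\eta$ by $-\eta$ (still closed), I obtain $\eta(X)>0$ pointwise. Then for every nonzero invariant measure $\mu$,
\[
A_\mu(\eta) = \int_M \eta(X)\,d\mu > 0,
\]
so $A_\mu$ is a nonzero class in $H_1^{DR}(M;\mathbb{R})$, and Theorem \ref{Schwartzman} produces the desired global cross-section.

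The main obstacle is the sign bookkeeping that yields $\eta(X)>0$: one has to relate the auxiliary volume $\Omega$ (used to define $X$) to the metric volume $\Omega_g$, keep track of the $(-1)^{n-1}$ factor coming from $\eta\wedge\omega$ versus $\omega\wedge\eta$, and use the freedom to swap $\eta\leftrightarrow-\eta$. Once this positivity is secured, the rest of the converse is a clean pairing against Schwartzman's cone of asymptotic cycles, and the forward direction requires nothing beyond the $\Phi_t$-invariance of $\omega$.
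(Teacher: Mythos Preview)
Your proof is correct. The direction ``cross-section $\Rightarrow$ I.H.'' matches the paper's argument exactly: translate the given cross-section along the flow to exhibit transitivity of $\omega$, then invoke Theorem~\ref{transitive nowhere-vanishing 1 or (n-1)-forms are I.H.}.

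For ``I.H.\ $\Rightarrow$ cross-section'' the two arguments share the same opening move---take $\eta=*_g\omega$, a closed $1$-form with $\omega\wedge\eta>0$ (equivalently $\eta(X)>0$, as your contraction computation shows)---but diverge thereafter. The paper proceeds constructively: Tischler's approximation replaces $\eta$ by $f^*d\theta$ for some submersion $f:M\to\mathbb{S}^1$, Ehresmann's fibration lemma makes $f$ a bundle projection, and Ehresmann's foliated-bundle theorem forces every fiber $f^{-1}(\theta)$ to meet every orbit, so the cross-section is exhibited explicitly. You instead pair $\eta$ against the cone of asymptotic cycles to get $A_\mu(\eta)>0$ for every invariant $\mu$, and then invoke Schwartzman's criterion (Theorem~\ref{Schwartzman}) as a black box. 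Your route is shorter and stays entirely within the asymptotic-cycle machinery the paper has already set up; the paper's route is more self-contained at this step (it avoids the Hahn--Banach and reflexivity ingredients hidden inside the proof of Theorem~\ref{Schwartzman}) and has the mild bonus of producing the cross-section concretely as a fiber of a map to $\mathbb{S}^1$.
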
 

\begin{proof} 
Let $\omega$ be a closed nowhere-vanishing harmonic $(n-1)$-form defined on an orientable Riemaniann manifold $(M,g)$. Then $\omega\wedge *_g\omega$ is  a volume form on $M$, with $*_g\omega$ being a closed nowhere-vanishing 1-form. By Tischler's argument \cite{tischler41fibering}, there is a 1-form $\eta$ with integral periods such that $\omega\wedge\eta$ is a volume form on $M$. Each of such forms have the form $f^*d\theta$ for some smooth function $f:M\longrightarrow\mathbb{S}^1$, where $d\theta$ is the obvious volume form on $\mathbb{S}^1$. Since $\eta=f^*d\theta$ is nowhere-vanishing, the function $f$ is a submersion. By a well known lemma of Charles Ehresmann \cite{ehresmann1947topologie,earle1967foliations} (\textit{Ehresmann's  lemma}), $\mathcal{B}=\{M,f,\mathbb{S}^1, F\}$ defines a fiber bundle with fiber $F$ diffeomorphic to $f^{-1}(\theta)$, $\theta\in\mathbb{S}^1$. Since $\Phi_\omega$ induces a 1-dimensional foliation of $M$  transversal  to the  fibers of this fiber bundle, we have by a theorem of Charles  (\textit{Ehresmann's theorem}, \cite[Proposition 1, pag. 91]{camacho2013geometric})  that 
$\mathcal{B}=\{M,f,\mathbb{S}^1\}$ is a foliated bundle, the foliation given by the orbits of $\Phi_\omega$, of course. In particular, every fiber of this bundle intercept every orbit of $\Phi$, concluding that $f^{-1}(\theta)$ is a global cross-section to $\Phi_\omega$ for all $\theta\in\mathbb{S}^1$ (not necessarily connected).  

Conversely, suppose $\Phi_\omega$ admits a global cross-section $\Sigma$. We can assume $\omega|_\Sigma>0$. By a standard argument from foliation theory (sliding $\Sigma$ transversely in the flow direction), it follows $\Phi_\omega$ has a global cross-section at every point $x\in M$. Thus $\omega$ is transitive. From Theorem \ref{transitive nowhere-vanishing 1 or (n-1)-forms are I.H.}, we conclude that $\omega$ is I.H. 
\end{proof} 

\begin{remark}\label{examples of flows with C^1 foliations without global cross-section}\rm{
The geodesic flow of a Riemannian manifold with negative sectional curvature gives us an example of a nowhere-vanishing volume-preserving flow admitting complementary $C^1$ foliation (is an Anosov flow). However, the inducing closed nowhere-vanishing $(n-1)$-form cannot be I.H. In this case, we cannot obtain a transversal foliation induced by the kernel of a closed 1-form.} 
\end{remark}

\begin{lemma}\label{fileds with complementary foliations}
Let $X$ be a nowhere-vanishing $C^r$-vector field admitting a complementary $C^r$-foliation $\mathcal{F}$, $r\geq 2$. Then,  $\mathcal{F}=\mathcal{F}_\eta$ for some closed  1-form $\eta$ of class $C^{r-1}$.
\end{lemma}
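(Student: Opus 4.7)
The plan is to construct $\eta$ directly from $X$ and $\mathcal{F}$, and then verify its closedness through Cartan's formula. The complementarity hypothesis yields a pointwise decomposition $T_xM = \mathbb{R}X_x \oplus T_x\mathcal{F}$, which determines a unique 1-form $\eta$ on $M$ characterized by
\[
\eta(X) \equiv 1, \qquad \eta|_{T\mathcal{F}} \equiv 0.
\]
Since $X$ is $C^r$ and the tangent distribution of a $C^r$-foliation is $C^{r-1}$, this $\eta$ is of class $C^{r-1}$; by construction $\ker \eta = T\mathcal{F}$, so the identification $\mathcal{F} = \mathcal{F}_\eta$ is automatic once $d\eta = 0$ is established.

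To verify closedness I would test $d\eta$ on a local frame $\{X, Y_1, \ldots, Y_{n-1}\}$, where the $Y_i$ are local sections spanning $T\mathcal{F}$; tensoriality of $d\eta$ then extends the vanishing to all of $TM$. Using $d\eta(U,V) = U\eta(V) - V\eta(U) - \eta([U,V])$: for a pair $Y_i, Y_j$ the first two terms vanish because $\eta$ kills $T\mathcal{F}$, and $\eta([Y_i, Y_j]) = 0$ by integrability of $T\mathcal{F}$ (Frobenius yields $[Y_i, Y_j] \in T\mathcal{F}$). For a cross-pair $X, Y$ with $Y \in \Gamma(T\mathcal{F})$, the first two terms vanish since $\eta(X) = 1$ is constant and $\eta(Y) = 0$, leaving
\[
d\eta(X, Y) = -\eta([X, Y]).
\]
This in turn vanishes precisely when $[X, Y] \in T\mathcal{F}$, that is, when the flow of $X$ preserves $\mathcal{F}$.

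The main obstacle is this last point, and it is where the hypothesis does the heavy lifting. The word \emph{complementary} should be read in the sense appropriate to a flow: $\mathcal{F}$ is transverse to $X$ and, moreover, its leaves are permuted by the flow of $X$, yielding $\mathcal{L}_X$-invariance of $T\mathcal{F}$ and hence $[X, Y] \in T\mathcal{F}$ for every $Y \in \Gamma(T\mathcal{F})$. This interpretation is consistent with Remark \ref{examples of flows with C^1 foliations without global cross-section} (the Anosov/$C^1$ case being genuinely weaker) and with the subsequent use of $\eta$ as a flat connection form in the circle-bundle discussion. Finally, the regularity $r \geq 2$ is precisely what makes $\eta$ of class $C^1$, which is the minimum needed so that $d\eta$ is defined and Frobenius's theorem is applicable in its classical form; this quantitative requirement is sharp, and explains why the statement can fail for merely $C^1$ complementary foliations.
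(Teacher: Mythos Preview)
Your construction of $\eta$ (normalize a defining $1$-form of $T\mathcal{F}$ so that $\eta(X)=1$) and the Frobenius handling of $d\eta(Y_i,Y_j)$ match the paper exactly. The divergence is at the cross term $d\eta(X,Y) = -\eta([X,Y])$. You dispose of it by \emph{reading flow-invariance into the hypothesis}: you take ``complementary'' to mean that the flow of $X$ permutes the leaves, so that $[X,Y]\in T\mathcal{F}$. The paper does \emph{not} do this. In the paper ``complementary'' means merely transverse (compare how the lemma is invoked in Theorem~\ref{An intrinsic characterization of nowhere-vanishing harmonic (n-1)-forms by foliation}), and the cross term is handled instead by a coordinate device: from a flow-box chart $(x,y)$ with $X=\partial_x$ and a foliation chart $(u,z)$ with $\mathcal{F}=\{z=\mathrm{const}\}$, the paper passes to the mixed chart $(x,u)$, asserts that in this chart $\partial_{u_i}$ is still tangent to $\mathcal{F}$, and then uses $[\partial_x,\partial_{u_i}]=0$ (coordinate fields in a single chart commute) to kill the term without ever invoking invariance of $\mathcal{F}$ under the flow.

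So relative to the paper you have not proved the lemma as stated: you have strengthened the hypothesis. That said, your instinct that something extra is needed at this step is well-founded. The paper's assertion that $\partial_{u_i}$ computed in the $(x,u)$ chart lies in $T\mathcal{F}$ is the delicate point (it differs from $\partial_{u_i}$ in the $(u,z)$ chart by a multiple of $\partial_z$), and for a merely transverse $\mathcal{F}$ the normalized $\eta$ need not be closed: on $\mathbb{R}^2$ with $\mathcal{F}=\{t=\mathrm{const}\}$ and $X=e^{s}\partial_t$ one gets $\eta=e^{-s}\,dt$ and $d\eta=-e^{-s}\,ds\wedge dt\neq 0$. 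Your added invariance hypothesis is precisely what makes the computation close; the paper's route purports to avoid it via the mixed coordinates, and that is where the two arguments genuinely part ways.
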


\begin{proof}
The foliation $\mathcal{F}$ is transversely orientable since it is transversal to the nowhere-vanishing vector field $X$. It is easily seen that there exists a differential $C^{r-1}$-form $\omega$ such that $T\mathcal{F}=\ker\omega$.  We claim that $\eta=\frac{1}{\omega(X)}\omega$ is closed. Consider  $(x,y)$ and $(u,z)$ be  local coordinates with common domain such that  $X=\partial_x$ and $\mathcal{F}$ is given locally by  $z=$constant. By transversality, $(x,u)$ defines a coordinate neighborhood. In this coordinate, since $\partial_{u_i}$ is tangent to $\mathcal{F}$, $\eta(X)=1$ and $[\partial_x,\partial_{u_i}]=0$, then
	\[
d\eta(\partial_x,\partial_{u_i})=
\partial_{x}(\eta(\partial_{u_i}))-\partial_{u_i}(\eta(\partial_x))+\omega[\partial_x,\partial_{z_i}]=0
	.\]
It follows that $\eta$ is closed. Now, note that $\ker\eta=\ker\omega$, hence $\eta$ is a closed form  and $\mathcal{F}=\mathcal{F}_\eta$. 

\end{proof}

\begin{theorem}\label{An intrinsic characterization of nowhere-vanishing harmonic (n-1)-forms by foliation}
Let $M$ be a closed orientable manifold. A nowhere-vanishing $C^r$-volume-preserving flow on $M$ admits a $C^r$-global cross-section if and only if it admits a $C^r$-transversal foliation ($r\geq 2$).
\end{theorem}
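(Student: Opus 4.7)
The plan is to treat the two implications separately. The implication ``$\Rightarrow$'' is immediate: given a $C^r$-global cross-section $\Sigma$, its flow-translates $\Phi_t(\Sigma)$ form a $C^r$-foliation of $M$ whose leaves are automatically transverse to $X$, the $C^r$-plaque structure coming from the flow-box around $\Sigma$.

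For the converse ``$\Leftarrow$'', I would invoke Lemma \ref{fileds with complementary foliations} to obtain a closed 1-form $\eta$ (of class $C^{r-1}$) with $T\mathcal{F} = \ker \eta$; after rescaling we may assume $\eta(X) \equiv 1$. A short calculation, using that $\eta\wedge\Omega$ vanishes identically for dimensional reasons so that $\iota_X(\eta\wedge\Omega)=0$, yields
\[
\eta \wedge \omega \;=\; \eta(X)\,\Omega \;=\; \Omega,
\]
where $\omega = \iota_X\Omega$ is the associated closed $(n-1)$-form. Now $\ker\omega = \operatorname{span}(X)$ has rank $n-1$ and $\ker\eta = T\mathcal{F}$ has rank $1$, and by the hypothesis of transversality these two kernels meet only at $0$; hence Theorem \ref{complementary foliation given by closed form} applies and shows that $\omega$ is intrinsically harmonic. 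Theorem \ref{An intrinsic characterization of nowhere-vanishing harmonic (n-1)-forms} then delivers a global cross-section for $\Phi$, concluding the argument.

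The point requiring the most care is a regularity mismatch: Lemma \ref{fileds with complementary foliations} provides only a $C^{r-1}$ one-form, so one cannot naively take a level set of a local primitive of $\eta$ to produce the cross-section. This discrepancy is absorbed in the proof of Theorem \ref{An intrinsic characterization of nowhere-vanishing harmonic (n-1)-forms}, which runs Tischler's approximation to replace $\eta$ by a $C^\infty$ closed 1-form $\eta'$ with integral periods; the open condition $\eta'(X) > 0$, which is exactly what ensures transversality of the smooth fibers of $\eta' = f^{*}d\theta$ to the orbits of $\Phi$, is preserved under sufficiently small $C^0$-perturbations on the compact manifold $M$. The fibers of the resulting submersion $f\colon M\to \mathbb{S}^1$ are then $C^\infty$, a fortiori $C^r$, global cross-sections, and Ehresmann's theorem guarantees they meet every orbit.
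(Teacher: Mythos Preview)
Your argument for the implication from a transversal foliation to a cross-section is correct and follows the paper's proof essentially verbatim: Lemma~\ref{fileds with complementary foliations} produces the closed $1$-form $\eta$ with $\eta(X)=1$, then Theorem~\ref{complementary foliation given by closed form} and Theorem~\ref{An intrinsic characterization of nowhere-vanishing harmonic (n-1)-forms} finish. Your discussion of the $C^{r-1}$ versus $C^{r}$ regularity, and how Tischler's approximation absorbs it, is more careful than the paper's own treatment.

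The other implication, however, has a genuine gap. The flow-translates $\Phi_t(\Sigma)$ of a cross-section do \emph{not} in general form a foliation of $M$: when the first-return time to $\Sigma$ is nonconstant, two distinct translates can pass through the same point with different tangent hyperplanes, so there is no well-defined leaf through that point. For a concrete instance on $\mathbb{T}^2$, take $X=(1+\epsilon\sin 2\pi y)\,\partial_x$ (which preserves $dx\wedge dy$) and $\Sigma=\{x=0\}$; the point $(0,0)$ lies on both $\Sigma=\Phi_0(\Sigma)$ and $\Phi_1(\Sigma)$, but the tangent line to $\Phi_1(\Sigma)$ there is spanned by $2\pi\epsilon\,\partial_x+\partial_y$, not by $\partial_y$. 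The flow-box around $\Sigma$ only foliates a collar neighbourhood, and the plaques do not match up globally.

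The paper handles this direction differently: from the cross-section it invokes Theorem~\ref{An intrinsic characterization of nowhere-vanishing harmonic (n-1)-forms} once more to obtain a metric $g$ with $*_g\omega$ closed, and then $\ker *_g\omega$ is an honest transversal foliation. An alternative elementary repair of your approach would be to first reparametrize the flow so that the return time to $\Sigma$ becomes identically $1$ (equivalently, identify $M$ with the mapping torus of the first-return map); the translates of $\Sigma$ under the \emph{reparametrized} flow then genuinely foliate $M$ and remain transverse to the original $X$.
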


\begin{proof} 
Let $\Omega$ be a volume form on $M$ and $\Phi$ be a flow preserving it. Let $X$ be a vector field generating  $\Phi$. Set $\omega=\iota_X\Omega$. If $X$ admits a complementary foliation, by Lemma \ref{fileds with complementary foliations} there exists a closed 1-form $\eta$ such that $\eta(X)=1$. Thus, $\omega\wedge\eta>0$ with $d\omega=0$ and $d\eta=0$. Hence, Theorem   \ref{complementary foliation given by closed form}  applies, concluding that $\omega$ is I.H. Therefore, by Theorem \ref{An intrinsic characterization of nowhere-vanishing harmonic (n-1)-forms}, $\Phi$ admits a global cross-section. Conversely, if $\Phi$ admits a global cross-section, then again by Theorem \ref{An intrinsic characterization of nowhere-vanishing harmonic (n-1)-forms}  there is a Riemannian metric $g$ on $M$ such that $*_g\omega$ is a closed 1-form. The foliation generated by the distribution $\ker *_g\omega$ is transversal to $\Phi$. The proof is completed.  
\end{proof}

\begin{remark}
From a result of Plante (\cite{plante1972anosov}, corollary 2.11), the latter theorem follows for  $C^1$ flows admitting complementary foliation given by the kernel of a nowhere-vanishing continuous closed 1-form. This result is the main ingredient in the preprint \cite{schwartzman1957asymptotic}, which states the same one as Theorem \ref{An intrinsic characterization of nowhere-vanishing harmonic (n-1)-forms}. It is worth pointing out that the characterization of I.H. nowhere-vanishing $(n-1)$-forms is implicit in Honda's thesis. Our proof here follows the spirit of Calabi's work and does not use Plante's result above.
\end{remark}

\section{A class of intrinsically harmonic forms}

This section was originated in an attempt to provide an example of a nowhere-vanishing closed $(n-1)$-form, non-exact and non I.H. Unfortunately, we did not get such an example nor manage to show that obtaining it is impossible. 

We will describe our first attempt and the results from them. The start was searching in the class of forms given in circle bundles $\mathcal{B}=\{B,p,\mathbb{T}^{n-1},\mathbb{S}^1\}$. By Tischler's argument and characterization of the covering of the torus, if the form $p^*\Omega_{\mathbb{T}^{n-1}}$ is I.H., then one can prove that $B$  and $\mathbb{T}^n$ are diffeomorphic. Hence, a possible counterexample would arise from a circle bundle $p:B\longrightarrow \mathbb{T}^{n-1}$  with total space not diffeomorphic to the $n$-torus and $p^*_{\Omega_{\mathbb{T}^{n-1}}}$ a non-exact form. However, we conclude that this attempt fails in any circle bundle, not only in circle bundles over the torus. 

Next, we will present a class of intrinsically harmonic forms. To this aim,  let us mention a result of Montgomery and, for later use, a mild generalization of it  \cite[Theorem 7.3]{epstein1976foliations}.

\begin{theorem}(Montgomery, \cite{montgomery1937pointwise})\label{Montgomery}Let $X$ be a connected metric space locally homeomorphic to $\mathbb{R}^n$.  If $T$ is a pointwise periodic homeomorphism of $X$,
then $T$ is periodic.
\end{theorem}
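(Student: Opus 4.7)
The plan is to combine the Baire category theorem with Brouwer's invariance of domain, following Montgomery's original strategy. For each integer $k \ge 1$ set $F_k := \{x \in X : T^k(x) = x\}$. Continuity of $T$ makes each $F_k$ closed, and the inclusion $F_m \subseteq F_{mj}$ (for every $j \ge 1$) means that the subfamily $\{F_{k!}\}_{k \ge 1}$ is an increasing exhaustion of $X$ by closed $T$-invariant sets, via the pointwise periodicity hypothesis $X = \bigcup_{k \ge 1} F_{k!}$. Note that the $T$-invariance of $F_{k!}$ passes to its interior since $T$ is a homeomorphism.

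Since $X$ is locally homeomorphic to $\mathbb{R}^n$, it is locally compact Hausdorff and hence a Baire space. The Baire category theorem therefore yields an integer $N$ with $\operatorname{int}(F_{N!}) \neq \emptyset$, and the same argument applied inside any nonempty open $W \subseteq X$ (itself a Baire space) shows that $\mathcal{U} := \bigcup_{k \ge 1} \operatorname{int}(F_{k!})$ is an open, dense, $T$-invariant subset of $X$ on which $T$ is \emph{locally periodic}, in the sense that each $x \in \mathcal{U}$ admits a neighborhood on which some $T^{k!}$ restricts to the identity.

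The main obstacle is to upgrade ``dense'' to ``equal to $X$'' and then to a single uniform period; it is here that the local Euclidean hypothesis is genuinely used. Arguing by contradiction, suppose $x_0 \in \partial \mathcal{U}$, and let $m_0$ be the period of $x_0$. Enclosing the finite orbit of $x_0$ in disjoint coordinate charts realizes the return map $T^{m_0}$ as a (pointwise periodic) homeomorphism of a Euclidean ball fixing the origin, while arbitrarily nearby points of $\mathcal{U}$ carry locally trivial iterates of $T^{m_0}$. Brouwer's invariance of domain, applied to the powers of $T^{m_0}$ on shrinking neighborhoods of the origin, rules out such a configuration and forces $T^{m_0}$ to be the identity on a full neighborhood of $x_0$; hence $x_0 \in \mathcal{U}$, a contradiction. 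Once $\mathcal{U} = X$, a parallel local-to-global analysis using the connectedness of $X$ (with second countability, which is automatic for connected manifolds) extracts a single integer $K$ with $T^K \equiv \operatorname{id}$ on $X$, which is the required periodicity.

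The hard part is precisely the local invariance-of-domain step ruling out boundary points of $\mathcal{U}$: this is the technical core of \cite{montgomery1937pointwise}, and I would cite Montgomery's original argument rather than reproduce the delicate topological bookkeeping here. Everything else, from the Baire exhaustion to the final connectedness argument, is formal once that local rigidity statement is in place.
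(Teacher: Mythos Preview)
The paper does not prove this theorem: it is quoted verbatim from Montgomery \cite{montgomery1937pointwise} and used as a black box, with no argument supplied. There is therefore no ``paper's own proof'' to compare against.

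Your sketch does follow Montgomery's original strategy (Baire exhaustion by the closed sets $F_{k!}$, followed by a local Euclidean rigidity argument to promote the dense open set $\mathcal{U}$ to all of $X$), and the scaffolding you provide is correct. But the step you flag as the ``hard part'' is not merely delicate bookkeeping: as written, your invariance-of-domain sentence does not explain \emph{how} Brouwer's theorem forces $T^{m_0}$ to be the identity near $x_0$. Invariance of domain tells you that injective continuous self-maps of open sets in $\mathbb{R}^n$ are open; it does not by itself say anything about fixed-point sets of pointwise periodic maps, and the periods of points of $\mathcal{U}$ near $x_0$ need not divide $m_0$. The actual mechanism in Montgomery's paper is a more intricate argument (close in spirit to Newman's theorem that no nontrivial periodic homeomorphism of a manifold can have a fixed-point set with interior), and you are right that reproducing it here would be disproportionate. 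Since you end up citing \cite{montgomery1937pointwise} for that step anyway, your proposal is, in effect, equivalent to what the paper already does: state the theorem and attribute it to Montgomery.
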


\begin{theorem}(K.C. Millett)\label{Montgomery generalization}
Let $M$ be a connected manifold and let $G$ be a group of homeomorphisms of $M$, such that the orbit of any point of $P$ is finite. Then $G$ is finite.
\end{theorem}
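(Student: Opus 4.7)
The plan is to reduce the statement to Montgomery's theorem and then carry out a local-to-global argument using orbit sizes and stabilizers. First I would observe that for any $g\in G$, the cyclic subgroup $\langle g\rangle$ has each orbit contained in the corresponding $G$-orbit, so its orbits are finite. Theorem \ref{Montgomery} then immediately says that $\langle g\rangle$ is periodic, i.e.\ $g$ has finite order. Thus $G$ is a torsion group; the nontrivial content of Millett's theorem is to promote this to actual finiteness of $G$.

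Next I would fix a point $x_0\in M$, write its orbit as $\{x_1,\ldots,x_k\}$, and set $G_0=\mathrm{Stab}(x_0)$. Since $[G:G_0]=k<\infty$, it suffices to prove that $G_0$ is finite. To get uniform control on orbit sizes, I would define
\[
M_n=\{x\in M:|Gx|\leq n\}
\]
and verify that each $M_n$ is closed: if $y_i\to y$ with $|Gy_i|\leq n$, then any $n+1$ distinct images $g_1 y,\ldots,g_{n+1}y$ would, by continuity and Hausdorffness, be separated into disjoint neighborhoods already realized by $g_j y_i$ for $i$ large, contradicting $|Gy_i|\leq n$. Since $M=\bigcup_n M_n$, the Baire category theorem produces some $M_n$ with nonempty interior $U$. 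After replacing $x_0$ by a point of $U$ and shrinking, I would find a precompact open neighborhood $V$ of $x_0$ such that the translates $g V$ for coset representatives of $G_0$ in $G$ are pairwise disjoint; the action of $G$ on $V$ then factors through $G_0$, and orbit sizes in $V$ are uniformly bounded by $n$.

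The main obstacle is the last step: to conclude that a group $G_0$ of finite-order homeomorphisms of $M$, all fixing a common point $x_0$, with orbit sizes uniformly bounded on a neighborhood, must be finite. The approach I would take is to consider the closure $\overline{G_0}$ of $G_0$ in $\mathrm{Homeo}(M)$ endowed with the compact-open topology. Uniform boundedness of orbit sizes near $x_0$, together with an Arzelà–Ascoli style argument applied to the precompact neighborhood $V$, shows that $\overline{G_0}$ is a compact topological group acting continuously on $M$. By Montgomery–Zippin structure theory, this compact transformation group is a compact Lie group. A compact Lie group consisting of elements whose orbits are all finite (equivalently, whose orbits on $M$ are all finite) must be $0$-dimensional: otherwise a one-parameter subgroup would produce an orbit homeomorphic to $\mathbb{S}^1$ or $\mathbb{R}$, contradicting finiteness of orbits. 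A $0$-dimensional compact Lie group is finite, so $\overline{G_0}$, and hence $G_0$, is finite. Combined with $[G:G_0]=k$, this gives $|G|=k|G_0|<\infty$, completing the argument. The delicate point—the place where I expect to have to work hardest and possibly invoke the heavy machinery above rather than a bare-hands proof—is precisely this identification of $\overline{G_0}$ as a compact Lie group from purely topological hypotheses.
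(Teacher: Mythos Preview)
The paper does not prove this theorem at all: it is simply quoted (with attribution to Millett, via \cite[Theorem 7.3]{epstein1976foliations}) as a known result, so there is no in-paper argument to compare your proposal against.

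As to the proposal itself, the opening moves are fine: Montgomery's theorem gives that every element of $G$ has finite order, the Baire argument producing an open set with uniformly bounded orbit size is standard, and the reduction to a finite-index stabilizer $G_0$ is correct. The genuine gap is your compactness claim for $\overline{G_0}$ in $\mathrm{Homeo}(M)$. Arzel\`a--Ascoli needs equicontinuity, and a uniform bound on orbit cardinalities does not supply it: nothing prevents a sequence $(g_j)\subset G_0$, each with orbits of size at most $n$, from stretching and compressing $V$ ever more wildly with no convergent subsequence. Without compactness of the closure, the Montgomery--Zippin machinery never engages, and the conclusion that $\overline{G_0}$ is a compact Lie group is unsupported.

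The route taken in the actual Millett/Epstein proof avoids this entirely. The missing ingredient is Newman's theorem: on a connected manifold there is an open cover $\mathcal{U}$ such that no nontrivial periodic homeomorphism has every orbit contained in some member of $\mathcal{U}$. Once you have a neighborhood $V$ on which $G_0$-orbits have size at most $n$, Newman's theorem forces the number of distinct restrictions $g|_V$ (for $g\in G_0$) to be finite, and connectedness of $M$ then propagates this to all of $M$. This is a direct combinatorial/topological argument and does not pass through Lie theory or any compactness of the acting group.
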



\begin{theorem}\label{examples of I.H. forms}
Let $M$ be a  closed smooth manifold  with a closed $(n-1)$-form $\omega$ that induces a pointwise periodic flow $\Phi$. If each orbit of the flow induced by $\omega$ is homologous to each other and $[\omega]\neq 0$, then $\omega$ is I.H. If $\omega$ is I.H., then there exists a smooth $\mathbb{S}^1$-action on $M$ with the same orbits as the one from the flow induced by $\omega$.
\end{theorem}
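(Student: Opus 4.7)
The plan splits the statement into two implications. For the first (homology plus non-triviality implies I.H.), I verify Schwartzman's criterion (Theorem \ref{Schwartzman}) for the associated flow $\Phi_\omega$, thereby producing a global cross-section, and then invoke Theorem \ref{An intrinsic characterization of nowhere-vanishing harmonic (n-1)-forms}. For the second (existence of an $\mathbb{S}^1$-action with the same orbits), I again extract a cross-section, apply Montgomery's Theorem \ref{Montgomery} to the first-return diffeomorphism to bound the period function, and then reparametrize.

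In detail, let $X$ be the associated vector field with period function $\lambda$, and let $\mu$ be the measure induced by a volume form $\Omega$ with $\omega=\iota_X\Omega$. Lemma \ref{canonical asymptotic cycle associated with a volume-preserving flow} gives $A_\mu=[\omega]\neq 0$. Combining Lemma \ref{asymptotic cycles of periodic points} with Birkhoff's ergodic theorem (noting that for a pointwise periodic flow the ergodic invariant probability measures are precisely the normalized orbit measures $\mu_x$, with $A_{\mu_x}=\lambda(x)^{-1}[C_x]$), one obtains for every invariant probability $\mu'$ the formula
\[
A_{\mu'}=\int_M \lambda(x)^{-1}[C_x]\,d\mu'(x).
\]
A fixed point of $\Phi$ would contribute a null-homologous orbit and, via the homology hypothesis, force $[C_x]=0$ for every $x$, making every $A_{\mu'}$ vanish and contradicting $A_\mu=[\omega]\neq 0$. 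Hence $\Phi$ is fixed-point-free, the common orbit class $[C_0]$ is nonzero, and the displayed formula simplifies to $A_{\mu'}=[C_0]\int\lambda^{-1}\,d\mu'$, a nonzero multiple of $[C_0]$ since $\lambda^{-1}>0$. Schwartzman's Theorem \ref{Schwartzman} then yields a global cross-section, and Theorem \ref{An intrinsic characterization of nowhere-vanishing harmonic (n-1)-forms} concludes that $\omega$ is I.H.

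For the converse direction, with $\omega$ I.H.\ and nowhere-vanishing (as just established), Theorem \ref{An intrinsic characterization of nowhere-vanishing harmonic (n-1)-forms} produces a compact smooth global cross-section $\Sigma\subset M$ with smooth first-return time $\tau\colon\Sigma\to(0,\infty)$ and first-return diffeomorphism $\phi\colon\Sigma\to\Sigma$. Each $\Phi$-orbit is compact and transverse to $\Sigma$, so it meets $\Sigma$ in finitely many points; hence $\phi$ is pointwise periodic on each component of $\Sigma$, and Montgomery's Theorem \ref{Montgomery} gives $\phi^k=\mathrm{id}_\Sigma$ for some integer $k\geq 1$. The $\Phi$-period at $x\in\Sigma$ equals $T(x)=\sum_{i=0}^{k-1}\tau(\phi^i(x))$, a smooth $\phi$-invariant positive function that extends uniquely to a smooth $\Phi$-invariant positive function $T\colon M\to(0,\infty)$ via the flow-box coordinates around $\Sigma$. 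The formula $\widetilde{\Phi}(s,x)=\Phi_{sT(x)}(x)$ for $s\in\mathbb{R}/\mathbb{Z}$ is then well defined (by the $\Phi$-invariance of $T$ and the identity $\Phi_{T(x)}(x)=x$), smooth, and defines a smooth $\mathbb{S}^1$-action on $M$ whose orbits coincide with those of $\Phi$.

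The main obstacle is the ergodic/Birkhoff step in the first direction, namely the identification of the ergodic invariant probability measures of a pointwise periodic flow with the normalized orbit measures and the resulting integral formula for $A_{\mu'}$; this is where the pointwise periodicity is exploited in an essential way. A secondary technicality is verifying that $T$ extends smoothly from $\Sigma$ to all of $M$ via the flow-box coordinates, which rests on the smoothness of $\Sigma$, of $\tau$, and of $\phi$.
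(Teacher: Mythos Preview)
Your forward direction is essentially the paper's argument: both compute $A_{\mu'}=\int_M \lambda(x)^{-1}[C_x]\,d\mu'(x)$, pull the constant class $[C_0]$ out, and invoke Schwartzman plus Theorem~\ref{An intrinsic characterization of nowhere-vanishing harmonic (n-1)-forms}. One simplification: you do not need Birkhoff or an ergodic decomposition, because the identity $A_{\mu'}[f]=\int_M A_x[f]\,d\mu'(x)$ is already proved in the paper as a general fact about invariant measures, and Lemma~\ref{asymptotic cycles of periodic points} then gives $A_x=\lambda(x)^{-1}[C_x]$ directly. Your treatment of fixed points is actually a bit cleaner than the paper's, which tacitly relies on the standing nowhere-vanishing hypothesis from the introduction.

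Your converse is correct but follows a genuinely different route. The paper does not use a cross-section at all: it takes a closed $1$-form $\eta$ with $\omega\wedge\eta>0$, builds a metric $g=g_1\oplus g_2$ on $\langle X\rangle\oplus\ker\eta$ with $g_1(X,X)=1$, checks that $\mathcal{L}_Xg=0$ so that $X$ is Killing, and then quotes an external result (pointwise periodic flows by isometries are periodic) to conclude. Your approach---extract a global cross-section $\Sigma$, apply Montgomery's Theorem~\ref{Montgomery} to the first-return map, and reparametrize by the smooth $\Phi$-invariant function $T$---is more hands-on and self-contained, avoiding the outside citation. Two small points: first, your parenthetical ``(as just established)'' is misplaced, since the second implication does not assume the hypotheses of the first; nowhere-vanishing should be taken as a standing hypothesis (as in the introduction's statement of the theorem), and indeed the paper's own converse argument needs it too. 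Second, when some $\phi$-orbits are shorter than $k$, your $T(x)$ is only an integer multiple of the minimal $\Phi$-period rather than the period itself; this is harmless for defining $\widetilde\Phi$, but you should not call it ``the $\Phi$-period''.
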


\begin{proof} Let $\Omega$ any volume form on $M$ and $X$ vector field given implicitly in the equation   $\omega=i_X\Omega$. By hypothesis, every orbit under $\Phi_X$ determines the same element in homology (of course, do not matter the volume form chosen initially). Let $\mu$ the measure induced by $\Omega$. By Lemma \ref{canonical asymptotic cycle associated with a volume-preserving flow},  $A_\mu=[\omega]$. By  De Rham's theory of currents, the homological class of $\omega$ as a diffuse current is zero if and only if the cohomology class of $\omega$ is zero. It follows that $A_\mu \neq 0$. Now,  given $f\in C/R$ and $\nu$ be a $\Phi$-invariant measure, it follows from Lemma \ref{asymptotic cycles of periodic points} that
\begin{align*}
A_\nu[f ]=\int_MA_x[f ]d\nu(x)\\
=\int_M\frac{1}{\tau_x}([C_x],[f ])d\nu(x).\\
\end{align*}
Therefore, $[C_x]\neq 0$ for all $x\in M$, since $[C_x]$ independ on $x$ by hypothesis and $A_\mu\neq 0$. Let  $x_0\in M$ and $f\in C/R$ such that $([C_{x_0}],[f])\neq 0$. Then
	\[
A_\nu[f ]=([C_{x_0}],[f])\int_M\frac{1}{\lambda(x)}d\nu(x)
	\]
for all $\Phi_X$-invariant measure $\nu$. Now, since $x\longrightarrow \lambda(x)$ is a positive function\footnote{The function $\lambda$ is lower-semicontinuous, hence Lebesgue measurable. It follows that $\int_Mfd\nu\neq 0$ for any measure $\nu$ defined on the $\sigma$-algebra generated by all Borel subsets of $ M$.}, we conclude that $A_\nu\neq 0$ for all $\Phi_X$-invariant measure $\nu$. Theorems  \ref{An intrinsic characterization of nowhere-vanishing harmonic (n-1)-forms by foliation} and  \ref{Schwartzman} together implies that $\omega$ is I.H.

 Conversely, suppose that $\omega$ is I.H. and let $X, \Omega$ with $\omega=i_X\Omega$.  Let $\eta$ be a closed 1-form with $\omega\wedge \eta>0$. Choose Riemannian metrics $g_1, g_2$  on the bundles generated by $X$ and $\ker \eta$, with $g_1(X,X)=1$, and let $g$ be the Riemannian metric on $M$ which is the orthogonal sum of $g_1$ and $g_2$.  It is easily to check that $\mathcal{L}_Xg=0$. Hence $X$ is a Killing vector field concerning to $g$ \cite[Proposition 26]{petersen2006riemannian}. From this we concludes that $\Phi_t$, the flow induced by $X$, is a isometry for all $t$ and, therefore, it is a isometry concerning the natural metric on $M$ induced by $g$. It follows by \cite[Theorem 1]{francaelizeu2022} that $\Phi$ is periodic.  It is easily seen that a periodic flow is induced by a circle action with the same flow's regularity, concluding the proof. 
\end{proof}


\section{Characterization of flat circle bundles}

It is well-known that every orientable smooth circle bundle  $\mathcal{B}=\{B,p,M,\mathbb{S}^1\}$ admits principal bundle structure. On other hand, a principal bundle is trivial if, and only if, it admits a global cross-section, i.e., a smooth function $s:M\longrightarrow B$ satisfying $ps=1$. The attempt to construct a global cross-section concerning a circle bundle, furnish an obstruction, to known, a element
	\[
\chi(\mathcal{B})\in H^2(M;\mathbb{Z})
	\]  
named \textit{Euler characteristic} of the bundle $\mathcal{B}$ (\cite[page 346]{morita2001geometry}). Hence, an orientable circle bundle  $\mathcal{B}$ has a global cross-section if and only if $\chi(\mathcal{B})=0$. This cohomology class carries not only information about the triviality of the bundle. As we will see, it is possible to characterize when is a circle bundle smooth foliated in terms of this class.

There is a natural bijection between the set of all smooth foliated orientable circle bundles over $M$ modulo leaf preserving bundle isomorphism and the set of all homomorphisms $\varphi:\pi_1(M)\longrightarrow \text{Diff}^+(\mathbb{S}^1)$ modulo conjugacy.  Many papers were concerned about the existence of a codimension-one foliation transverse to the fibers of a given circle bundle, i.e., when is foliated a circle bundle.
About circle bundles over surfaces, a necessary and sufficient condition for the existence of a transverse foliation was obtained by Milnor \cite{milnor1958existence} and Wood  \cite{wood1971bundles}. Circle bundles over a three-manifold were studied by  Miyoshi \cite{miyoshi1997foliated}. An answer to when is smooth foliated a circle bundle was given in \cite{miyoshi2001remark,oprea2005flat}. 

In this section, the main result is to characterize a flat circle bundle over an orientable base by proving Theorem \ref{main result}. Also is presented similar to the non-orientable cases. Most of the claims in Theorem \ref{main result} are known in the literature. However, we furnish alternative proof based on our techniques. We began with a useful lemma. 
 
\begin{lemma}\label{fiber lemma}
Let $\mathcal{B}=\{B,p,M,F\}$ be a differentiable fiber bundle with compact total space and base be an orientable manifold. Let $\Omega$ be a volume form on $M$ with $\int_M\Omega=1$.  Then, the Poincaré dual to the fiber of $\mathcal{B}$ can be represented by $p^*\Omega $. In particular, $[F]=0 $ in $H_{\dim F}(B;\mathbb{R})$ if and only if $p^*\Omega$  is an exact form.
\end{lemma}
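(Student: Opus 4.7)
The plan is to verify the defining property of the Poincaré dual directly: for every closed $k$-form $\alpha$ on $B$ (with $k=\dim F$ and $n=\dim M$, so $\dim B=n+k$), I want to show
\[
\int_{F_{x_0}}\alpha|_{F_{x_0}}=\int_B\alpha\wedge p^*\Omega.
\]
Once this identity is established, the cohomology class $[p^*\Omega]\in H^n(B;\mathbb{R})$ satisfies exactly the integration pairing that characterizes the Poincaré dual of $[F]\in H_k(B;\mathbb{R})$.

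The first step is to show that the fiber-integration function
\[
\varphi(x):=\int_{F_x}\alpha|_{F_x}
\]
is constant on $M$. Assuming $M$ connected without loss of generality, any two points $x,x'\in M$ can be joined by a smooth path $\gamma$; the preimage $p^{-1}(\gamma)$ is a smooth submanifold of $B$ whose boundary is $F_{x'}-F_x$ (with the fiber orientations), giving an oriented cobordism between the two fibers in $B$. Since $\alpha$ is closed, Stokes' theorem yields $\varphi(x)=\varphi(x')$, so $\varphi\equiv c:=\int_F\alpha|_F$.

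The second step is a global Fubini argument. I would pick a locally finite cover $\{U_i\}$ of $M$ by opens over which $\mathcal{B}$ trivializes, with a subordinate partition of unity $\{\lambda_i\}$. On each trivialization $p^{-1}(U_i)\cong U_i\times F$, Fubini's theorem gives
\[
\int_{p^{-1}(U_i)}(\lambda_i\circ p)\,\alpha\wedge p^*\Omega=\int_{U_i}\lambda_i(x)\,\varphi(x)\,\Omega(x),
\]
and summing over $i$ produces
\[
\int_B\alpha\wedge p^*\Omega=\int_M\varphi\,\Omega=c\int_M\Omega=c,
\]
using $\int_M\Omega=1$. This finishes the identification of $[p^*\Omega]$ as the Poincaré dual of $[F]$.

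For the final clause, note that $B$ is a closed orientable manifold (orientability of $M$ together with orientability of the typical fiber and of the structure group action — automatic for principal circle bundles in the paper's applications — guarantees orientability of $B$). Poincaré duality then identifies $H_k(B;\mathbb{R})\cong H^n(B;\mathbb{R})$, so $[F]=0$ in $H_k(B;\mathbb{R})$ if and only if $[p^*\Omega]=0$ in $H^n(B;\mathbb{R})$, equivalently $p^*\Omega$ is exact. The main subtlety is the homological constancy of $\varphi$; the Fubini step is routine after trivializing, and the Poincaré duality conclusion is formal.
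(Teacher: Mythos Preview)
Your argument is correct, but it takes a genuinely different route from the paper's. The paper works locally and through Thom-class machinery: it fixes a trivializing chart $U\cong\mathbb{R}^n$, identifies the Thom class of the (trivial) normal bundle $F\times U\to F$ with $\pi_1^*\omega$ for a compactly supported generator $\omega\in H^n_c(U)$, invokes the identification of the Thom class with the Poincar\'e dual of the zero section, extends to all of $B$ by the localization principle, and finally notes that $\omega$ and $\Omega$ are cohomologous on $M$ since both integrate to $1$. You instead verify the defining integration pairing $\int_F\alpha=\int_B\alpha\wedge p^*\Omega$ directly, using a cobordism between fibers to show the fiber integral of $\alpha$ is constant and then a partition-of-unity Fubini step. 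The paper's approach ties the lemma into the standard Bott--Tu framework and is more conceptual; yours is more self-contained, avoids Thom classes entirely, and makes the role of the normalization $\int_M\Omega=1$ completely transparent. Both proofs tacitly require $B$ to be closed and orientable (for Poincar\'e duality and for coherent fiber orientations); you at least flag this explicitly, which is a point in your favor.
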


\begin{proof}
Let $x\in M$. Denote by $F$ the fiber over $x$. Let $U$ be a sufficiently small   open set diffeomorphic to   $\mathbb{R}^n$ such that there exists a trivialization
  \[
\phi:p^{-1}(U)\longrightarrow U\times F
,\]
for which the normal bundle of $F$ is equivalent to the trivial vector bundle 
    \[
\tau:F\times U\longrightarrow F.
    \]
Let $\pi_1:U\times F\longrightarrow U$
the projection on the first factor. Given a $n$-form $\omega$ in $U$ generating  $H^n_c(U)=\mathbb{R}$ (the compactly supported cohomology), the closed form
	\[
\eta=\pi_1^*\omega
	\]
represents the \textit{Thom class} of the bundle $\tau$, because generates  $H^n_c(\{y\}\times U)$ for each $y\in F$ \cite[proposition 6.18]{bott1982differential}. On the other hand, the Thom class of  $\tau$ can be represented by the Poincaré dual of the null section, which in this case is $F$ \cite[proposition 6.24]{bott1982differential}. Then, $\eta$ represents the Poincaré dual of $F$ in $U\times F$ and, therefore,
	\[
\phi^*\eta=\phi^*(\pi_1^*\omega)=(\pi_1\circ\phi)^*\omega=p^*\omega
	\]
represents the Poincaré dual of $F$ in  $p^{-1}(U)$ (since $\phi$ is a orientation preserving-diffeomorphism \cite[page 69]{bott1982differential}). The trivial extension of  $p^*\omega$ to $B$  gives the Poincaré dual of $F$ in $B$ (\textit{localization principle}). Clearly, by a trivial extension, $\omega$ defines a form  in $M$  and
	\[
([\omega],[M])=\int_M\omega=\int_U\omega=1=([\Omega_M],[M])
	,\]
concluding that $\Omega_M $ and $\omega$ determines the same cohomology class. To finish, $p^*\Omega_M$ and $p^*\omega$ determines the same cohomology class with  $\pi^*\omega$ representing the Poincaré dual of $F$ in $B$. Therefore, we have $[F]=0$ in $H_{\dim F}(B;\mathbb{R})$ if, and only if, $p^*\Omega_M$ is an exact form.
\end{proof}

As one example,  let $\mathcal{B}=\{B,p,M,F\}$ be a differentiable fiber bundle with compact total space and base be an orientable manifold. Suppose that $\chi(F)\neq 0$ (the Euler characteristic of $F$ is nonzero). Let $\Omega$ be a volume form on $M$. Then $\pi^*\Omega$ determines a nonzero cohomology class by \ref{fiber lemma}. Indeed, since $\chi(F)\neq 0$, it is not possible for $F$ to be a boundary and, therefore, we have $[F]\neq 0$ in $H_{\dim F}(B;\mathbb{R})$.

\begin{theorem}\label{The main theorem 1}
Let $\mathcal{B}=\{B,p,M,\mathbb{S}^1\}$ be a smooth orientable circle bundle over a closed orientable manifold. If $p^*(\Omega_M)$ determines a nontrivial cohomology class, then $\mathcal{B}$ is $\text{Diff}^+(\mathbb{S}^1)$-equivalent to a flat circle bundle.
\end{theorem}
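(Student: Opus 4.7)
The plan is to verify that $\omega := p^{*}(\Omega_M)$ satisfies the hypotheses of Theorem \ref{examples of I.H. forms}, and then to translate the resulting intrinsic harmonicity, via Theorem \ref{An intrinsic characterization of nowhere-vanishing harmonic (n-1)-forms by foliation}, into a codimension-one foliation of $B$ transverse to the fibers. Such a transverse foliation is precisely what realises $\mathcal{B}$ as a foliated, hence flat, circle bundle in the $\mathrm{Diff}^{+}(\mathbb{S}^{1})$ category.

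First I would produce the associated vector field of $\omega$. Since $M$ is closed orientable and the fiber $\mathbb{S}^{1}$ is closed orientable, $B$ is a closed orientable manifold; one may fix a volume form $\Omega_B$ on $B$ and invoke Lemma \ref{associated form} to obtain the unique field $X$ with $\iota_X \Omega_B = \omega$. Because $\Omega_M$ is a volume form and $p$ is a submersion, $\omega$ is closed and nowhere zero, and therefore so is $X$. The crucial observation is that $X$ is \emph{vertical}: in any local trivialisation $p^{-1}(U) \cong U \times \mathbb{S}^{1}$ with adapted coordinates $(x^{1},\ldots,x^{n-1},\theta)$ and $\Omega_B = dx^{1}\wedge\cdots\wedge dx^{n-1}\wedge d\theta$, the equation $\iota_X\Omega_B = dx^{1}\wedge\cdots\wedge dx^{n-1}$ forces $X = (-1)^{n-1}\partial_\theta$. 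Consequently every orbit of $\Phi_X$ is a fiber up to reparametrisation, so $\Phi_X$ is pointwise periodic, and any two orbits are ambient isotopic, hence represent the same class $[F]\in H_{1}(B;\mathbb{Z})$.

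Next I would apply Theorem \ref{examples of I.H. forms}: its hypotheses — closed manifold, pointwise periodic flow, mutually homologous orbits, and $[\omega]\neq 0$ (our standing assumption) — are all in place. The theorem yields both that $\omega$ is I.H.\ and that there is a smooth $\mathbb{S}^{1}$-action on $B$ with the same orbits as $\Phi_X$, i.e.\ with orbits equal to the fibers of $\mathcal{B}$. This action is free (the orbits are circles, not fixed points), so it equips $p:B\to M$ with a new principal $\mathbb{S}^{1}$-bundle structure whose fiber partition coincides with the original one. The two principal structures therefore differ only by a fiberwise orientation-preserving reparametrisation, which is exactly a $\mathrm{Diff}^{+}(\mathbb{S}^{1})$-equivalence.

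Finally, intrinsic harmonicity of $\omega$ lets me invoke Theorem \ref{An intrinsic characterization of nowhere-vanishing harmonic (n-1)-forms by foliation} to produce a smooth codimension-one foliation $\mathcal{F}$ on $B$ transverse to $\Phi_X$, hence transverse to the fibers. Pairing $\mathcal{F}$ with the new $\mathbb{S}^{1}$-bundle structure exhibits $\mathcal{B}$ as a foliated circle bundle — which is precisely flatness in the $\mathrm{Diff}^{+}(\mathbb{S}^{1})$ category. The step requiring the most care is exactly this last one: the cited theorem delivers a transverse foliation but says nothing about its $\mathbb{S}^{1}$-invariance, so the conclusion cannot be strengthened to honest flatness of $\mathcal{B}$ as a principal circle bundle. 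This mismatch between the smooth transverse foliation furnished by the I.H.\ machinery and the stricter invariance needed for a principal flat connection is what forces the statement to be phrased up to $\mathrm{Diff}^{+}(\mathbb{S}^{1})$-equivalence, and I would devote most of the write-up to making this distinction explicit.
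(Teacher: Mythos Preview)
Your argument tracks the paper's proof through the first half: verifying the hypotheses of Theorem~\ref{examples of I.H. forms} (pointwise periodic flow along the fibers, mutually homologous orbits, $[\omega]\neq 0$) and extracting intrinsic harmonicity, hence a smooth codimension-one foliation $\mathcal{F}$ transverse to the fibers. That part is fine and matches the paper.

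The gap is in your final step. You assert that a transverse foliation, i.e.\ a \emph{foliated} circle bundle structure, ``is precisely flatness in the $\mathrm{Diff}^{+}(\mathbb{S}^{1})$ category,'' and that this yields $\mathrm{Diff}^{+}(\mathbb{S}^{1})$-equivalence to a \emph{flat} circle bundle. But ``foliated'' only means the holonomy lands in $\mathrm{Diff}^{+}(\mathbb{S}^{1})$ with the discrete topology; ``flat circle bundle'' in the statement means a principal $\mathbb{S}^{1}$-bundle with flat connection, i.e.\ holonomy in $\mathbb{S}^{1}\subset\mathrm{Diff}^{+}(\mathbb{S}^{1})$. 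These are genuinely different: the Milnor--Wood examples give foliated circle bundles over surfaces with nonzero Euler number, and such bundles are \emph{not} $\mathrm{Diff}^{+}(\mathbb{S}^{1})$-equivalent to any flat principal $\mathbb{S}^{1}$-bundle (the Euler class is a $\mathrm{Diff}^{+}(\mathbb{S}^{1})$-invariant, and flat principal $\mathbb{S}^{1}$-bundles have torsion Euler class). So the implication ``transverse foliation $\Rightarrow$ $\mathrm{Diff}^{+}(\mathbb{S}^{1})$-equivalent to flat'' fails in general, and neither the new $\mathbb{S}^{1}$-action you extracted from Theorem~\ref{examples of I.H. forms} nor the foliation $\mathcal{F}$ bridges this gap on its own.

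The paper closes exactly this gap with an argument you have omitted. After obtaining the transverse closed $1$-form, it invokes Tischler to replace it by one with integral periods, so that the transverse foliation has \emph{compact} leaves. Compactness forces every holonomy orbit on a fiber to be finite; Millett's theorem (Theorem~\ref{Montgomery generalization}) then gives that the holonomy group $\varphi(\pi_1(M))\subset\mathrm{Diff}^{+}(\mathbb{S}^{1})$ is finite. Averaging a metric over this finite group shows it acts by isometries, hence is conjugate into $\mathbb{S}^{1}$; the conjugated holonomy defines the flat principal bundle, and the conjugating diffeomorphism furnishes the $\mathrm{Diff}^{+}(\mathbb{S}^{1})$-equivalence. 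This Tischler--finiteness--averaging chain is the missing idea in your proposal.
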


\begin{proof}
By Lemma \ref{fiber lemma}, the $\mathcal{B}$'s fibers are homologous to each other. Since the flow generated by $p^*(\Omega_M)$ has the fibers of $\mathcal{B}$ as its orbits, it follows by Theorem \ref{examples of I.H. forms} that $p^*(\Omega_M)$ is I.H.  Then, there exists a closed 1-form $\eta$ transversal to the flow generated by $p^*(\Omega_M)$. By Tischler's argument \cite{tischler41fibering}, we can take $\eta$ with integral periods. Let $f:B\longrightarrow \mathbb{S}^1$ be a smooth function such that  $\eta=f^*d\theta$. It follows that $\mathcal{B}$ is foliated by compact leaves. By fiber's compactness, this bundle become a foliated bundle \cite[Chapter IV, \S 2]{camacho2013geometric}. Let 
	\[
\varphi:\pi_1(M)\longrightarrow \text{Diff}^+(\mathbb{S}^1)
	\]
be the holonomy homomorphism which characterizes this foliated bundle. Since each leaf of $\mathcal{F}_\eta$ is  compact and transversal to the fibers of $\mathcal{B}$, then the orbit of any point  in the fiber $B_x$ over $x$ under the group $\varphi(\pi_1(M))$  is finite (indeed, it is equal to $\# (f^{-1}(f(x))\cap B_x)$). This fact and Theorem \ref{Montgomery generalization} imply that $\varphi(\pi_1(M))$ is a finite group. Now, let $g$ be a Riemannian metric on $\mathbb{S}^1$. Set
	\[
h(X,Y)=\frac{1}{|\varphi(\pi_1(M))|}\sum_{k\in\varphi(\pi_1(M))}g(d\varphi(k)(X),d\varphi(k)(Y)).
	\]
Then, $h$ is a Riemannian metric on $\mathbb{S}^1$ such that $\varphi(k)$ is an isometry for all $k\in \text{Im}(\varphi)$. By the well-known characterization of isometries of the circle, it follows that   there exists a diffeomorphism $f:\mathbb{S}^1\longrightarrow\mathbb{S}^1$ such that $f^{-1}\circ\varphi(\alpha)\circ f\in\mathbb{S}^1$ for all $\alpha\in\pi_1(M)$.  The  fiber bundle obtained considering as holonomy homomorphism the map 
	\[
\varphi^{'}:\pi_1(M)\longrightarrow  \mathbb{S}^1
	\]
 given by $\varphi^{'}(\alpha)=f^{-1} \varphi(\alpha)f$ is a differentiable principal  flat bundle \cite[Lemma 1]{milnor1958existence}. Furthermore, this bundle is $\text{Diff}^+(\mathbb{S}^1)$-equivalent to $\mathcal{B}$. The proof is completed. 
\end{proof}

\begin{theorem}\label{characterization of flat circle bundle in the linguage of I.H. forms}
Let $\mathcal{B}=\{B,p,M,\mathbb{S}^1\}$ be a differentiable principal circle bundle with $M$ closed and orientable.  Then $\mathcal{B}$ admits a flat connection if and only if the form $p^*(\Omega_M)$  is I.H.
\end{theorem}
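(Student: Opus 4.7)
The plan is to prove both implications by reducing to Theorem \ref{complementary foliation given by closed form}, the algebraic criterion that a closed $(n-1)$-form of full rank is I.H. iff it admits a complementary closed $1$-form. Throughout, set $\omega := p^{*}\Omega_{M}$ and let $X^{*}$ denote the fundamental vector field of the $\mathbb{S}^{1}$-action on $B$. Since $p$ is a submersion and $\Omega_{M}$ a volume form, $\omega$ is a nowhere-vanishing closed $(n-1)$-form; a one-line calculation ($i_{X^{*}}p^{*}\Omega_{M}=p^{*}i_{p_{*}X^{*}}\Omega_{M}=0$ together with the fact that $\ker\omega$ is $1$-dimensional) shows $\ker\omega_{b}=\langle X^{*}_{b}\rangle$ coincides with the vertical tangent space.

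For the forward direction ($\Rightarrow$), I unpack the definition: a flat principal connection on $\mathcal{B}$ is a closed $\mathbb{S}^{1}$-invariant $1$-form $\alpha$ on $B$ with $\alpha(X^{*}) = 1$. Since $\ker\omega = \langle X^{*}\rangle$ and $\alpha(X^{*}) = 1 \neq 0$, we have $\ker\omega\cap\ker\alpha = \{0\}$, and Theorem \ref{complementary foliation given by closed form} immediately gives that $\omega$ is I.H.

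For the backward direction ($\Leftarrow$), Theorem \ref{complementary foliation given by closed form} supplies a closed $1$-form $\eta$ on $B$ with $\ker\eta\cap\ker\omega = \{0\}$, i.e.\ $\eta(X^{*})\neq 0$ pointwise; after replacing $\eta$ by $-\eta$ if needed, assume $\eta(X^{*}) > 0$. The remaining task is to promote $\eta$ to an actual flat principal connection. I average against the circle action: put $\bar\eta := \int_{0}^{1} R_{\theta}^{*}\eta\, d\theta$. Then $\bar\eta$ is closed (integration commutes with $d$), $\mathbb{S}^{1}$-invariant, and $\bar\eta(X^{*}) > 0$ everywhere because $(R_{\theta})_{*}X^{*}=X^{*}$. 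The crux is a short Cartan-formula computation: invariance gives $\mathcal{L}_{X^{*}}\bar\eta = 0$ and closedness gives $i_{X^{*}}d\bar\eta = 0$, whence
\[
d(\bar\eta(X^{*})) = \mathcal{L}_{X^{*}}\bar\eta - i_{X^{*}}d\bar\eta = 0.
\]
Thus $\bar\eta(X^{*})$ is a positive constant $c$ on the (connected) total space $B$, and $\alpha := \bar\eta/c$ is a closed $\mathbb{S}^{1}$-invariant $1$-form with $\alpha(X^{*})\equiv 1$, i.e.\ a flat principal connection on $\mathcal{B}$.

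The main obstacle I anticipate is the backward step, namely converting an arbitrary complementary closed $1$-form into a genuine principal connection. The naive rescaling $\eta/\eta(X^{*})$ destroys closedness, and one needs both $\mathbb{S}^{1}$-invariance (gained by averaging) \emph{and} closedness to force the normalization factor to be a constant, via the Cartan identity above. An alternative route would be to cite Theorem \ref{The main theorem 1} — which already produces a $\text{Diff}^{+}(\mathbb{S}^{1})$-equivalent flat bundle — and then transfer the flat connection to $\mathcal{B}$ via the Euler-class classification of oriented circle bundles (using that $\omega$ nowhere vanishing and I.H. forces $[\omega]\neq 0$, so Theorem \ref{The main theorem 1} applies). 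The averaging argument sketched above is more self-contained and keeps the proof inside the $(n-1)$-form framework that the paper emphasizes.
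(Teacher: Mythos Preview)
Your proof is correct, and the backward direction takes a genuinely different route from the paper's.

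For the forward direction (flat $\Rightarrow$ I.H.), the paper merely says ``the converse follows easily from Theorem~\ref{The main theorem 1}'', which is opaque; your unpacking via the closed connection form and Theorem~\ref{complementary foliation given by closed form} is almost certainly what is intended, so here the two agree.

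For the backward direction (I.H. $\Rightarrow$ flat), the paper argues indirectly: it invokes Theorem~\ref{The main theorem 1} (which in turn rests on asymptotic cycles, Schwartzman's criterion, Tischler's approximation, and Millett's finiteness theorem) to produce a flat principal bundle that is merely $\mathrm{Diff}^{+}(\mathbb{S}^{1})$-equivalent to $\mathcal{B}$, and then appeals to the deformation retraction $\mathrm{Diff}^{+}(\mathbb{S}^{1})\simeq\mathbb{S}^{1}$ and Kobayashi--Nomizu to transfer the flat connection to $\mathcal{B}$ itself. Your averaging argument bypasses all of this: starting from the complementary closed $1$-form supplied by Theorem~\ref{complementary foliation given by closed form}, you average over the $\mathbb{S}^{1}$-action and use the Cartan identity $d(\bar\eta(X^{*}))=\mathcal{L}_{X^{*}}\bar\eta-i_{X^{*}}d\bar\eta=0$ to see that the normalization factor is constant, yielding a flat connection on $\mathcal{B}$ directly. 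This is substantially more elementary and self-contained; the paper's route, on the other hand, extracts additional structural information along the way (finite holonomy, compact leaves) that feeds into the other equivalences of Theorem~\ref{theorem about Euler's class}. You correctly flag the paper's route as an alternative in your final paragraph.
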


\begin{proof}
Suppose that $\pi^*\Omega_M$ is I.H. Then, by Theorem \ref{The main theorem 1} there exists a differentiable principal flat circle bundle which is $\text{Diff}^+(\mathbb{S}^1)$-equivalent to $\mathcal{B}$. It is well-known that the group of all orientation preserving diffeomorphism of the circle retracts onto $\mathbb{S}^1$. It follows that  if  $\mathcal{B}$ is differentiable isomorphic to $\mathcal{B}^{'}$ as  $\text{Diff}^+(\mathbb{S}^1)$-bundles then the same is true as $\mathbb{S}^1$-bundles. To finish, a flat connection in $\mathcal{B}^{'}$ can be transferred to a flat connection for $\mathcal{B}$ \cite[pag. 79]{kobayashi1963foundations}. The converse follows easily from Theorem \ref{The main theorem 1}. 
\end{proof}

To finish the proof of Theorem \ref{main result} we need the following observation.

\begin{remark}(Plante, \cite{plante1974foliations})\label{Plante's example}
Given a smooth orientable sphere bundle	 $\mathcal{B}=\{B,p,M,\mathbb{S}^{k}\}$   we have  the Gysin cohomology
sequence (real coefficients)
$$...\longrightarrow H^k(M) \longrightarrow^{p^*}   H^k(B) \longrightarrow  H^0(M)\longrightarrow^{\Psi}  H^{k+1}(M) \longrightarrow... $$
where $\Psi(1)\in H^{k+1}(M)$ is just the Euler's class of the bundle. Hence, the fiber is null-homologous in $H_k(E)$ if and only if $p*$ is surjective. This happens if, and only if, the Euler's class is nonzero. 
\end{remark}

We are now in a position to prove Theorem \ref{main result}.

\begin{theorem}\label{theorem about Euler's class}
 A differentiable principal circle bundle $\mathcal{B}=\{B,p,M,\mathbb{S}^1\}$ over a closed orientable base admits a flat connection, if and only if, one (and consequently all) of the four conditions below holds:
\begin{itemize}
\item[(1)] $p^*(\Omega_M)$ is I.H.;
\item[(2)] $p^*(\Omega_M)$ determines a nonzero cohomology class;
\item[(3)] the fiber represents a nonzero class in $H_1(B;\mathbb{R})$;
\item[(4)] $\chi(\mathcal{B})$, the Euler's class of $\mathcal{B}$, is a torsion element. 
\end{itemize}
\end{theorem}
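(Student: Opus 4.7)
The plan is to close a cycle of implications among the five statements (flatness plus (1)--(4)) using results already established in the excerpt. Theorem \ref{characterization of flat circle bundle in the linguage of I.H. forms} supplies the equivalence flat $\iff$ (1). Theorem \ref{The main theorem 1}, combined with the $\mathbb{S}^1 \hookrightarrow \text{Diff}^+(\mathbb{S}^1)$ retraction and the connection-transfer argument already used inside the proof of Theorem \ref{characterization of flat circle bundle in the linguage of I.H. forms}, gives (2) $\Rightarrow$ flat. It therefore suffices to establish (1) $\Rightarrow$ (2) together with the purely cohomological equivalences (2) $\iff$ (3) and (3) $\iff$ (4).

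For (1) $\Rightarrow$ (2) the plan is to apply Hodge's theorem directly. Suppose $p^*\Omega_M$ is $g$-harmonic for some Riemannian metric $g$ on the closed orientable manifold $B$. Because $p:B\to M$ is a surjective submersion, at each $b\in B$ one can pick tangent vectors $v_1,\dots,v_n\in T_b B$ whose images $dp_b(v_i)$ form a basis of $T_{p(b)}M$; then
\[
(p^*\Omega_M)_b(v_1,\dots,v_n) = \Omega_{M,p(b)}(dp_b(v_1),\dots,dp_b(v_n)) \neq 0,
\]
so $p^*\Omega_M$ is a nowhere-vanishing $n$-form on $B$, and in particular is not identically zero. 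If the class $[p^*\Omega_M]\in H^n_{DR}(B)$ were zero, Hodge uniqueness would force $p^*\Omega_M\equiv 0$, a contradiction. Hence $[p^*\Omega_M]\neq 0$, which is exactly (2).

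The two remaining equivalences are immediate. The equivalence (2) $\iff$ (3) is a direct restatement of Lemma \ref{fiber lemma}, which exhibits $p^*\Omega_M$ as a representative of the Poincar\'e dual of $[F]\in H_1(B;\mathbb{R})$, so one class is zero precisely when the other is. The equivalence (3) $\iff$ (4) is the content of Remark \ref{Plante's example}: the Gysin sequence forces $[F]\neq 0$ in $H_1(B;\mathbb{R})$ to occur exactly when the real Euler class of $\mathcal{B}$ vanishes in $H^2(M;\mathbb{R})$, i.e., exactly when the integral class $\chi(\mathcal{B})\in H^2(M;\mathbb{Z})$ is a torsion element.

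The main obstacle is the single genuinely new step, namely (1) $\Rightarrow$ (2); once one records the (slightly pedestrian but essential) observation that the pullback of a volume form by a submersion is automatically nowhere-vanishing as a differential form, Hodge uniqueness closes the implication without further work. Everything else is bookkeeping: stitching together Theorems \ref{The main theorem 1} and \ref{characterization of flat circle bundle in the linguage of I.H. forms}, Lemma \ref{fiber lemma}, and Remark \ref{Plante's example} into the chain \textup{flat} $\iff$ (1) $\Rightarrow$ (2) $\iff$ (3) $\iff$ (4), with the independent implication (2) $\Rightarrow$ \textup{flat} closing the loop.
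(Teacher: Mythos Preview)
Your proof is correct and follows essentially the same approach as the paper: both arguments establish (2)$\iff$(3) via Lemma~\ref{fiber lemma}, (3)$\iff$(4) via Remark~\ref{Plante's example}, and (1)$\Rightarrow$(2) via Hodge uniqueness (your explicit verification that $p^*\Omega_M$ is nowhere-vanishing is a welcome detail the paper leaves implicit). The only cosmetic difference is that for the remaining link the paper cites Theorem~\ref{examples of I.H. forms} to get (2)$\Rightarrow$(1) directly and then invokes Theorem~\ref{The main theorem 1}, whereas you route through Theorem~\ref{characterization of flat circle bundle in the linguage of I.H. forms} for flat$\iff$(1) together with (2)$\Rightarrow$flat; since Theorem~\ref{characterization of flat circle bundle in the linguage of I.H. forms} is itself built on Theorem~\ref{The main theorem 1}, the two packagings are equivalent.
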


\begin{proof}
Note we have the equivalence between item 2 and item 3, by Lemma \ref{fiber lemma}, and between item 3 and item 4, by Plante's observation \ref{Plante's example} (considering that the real Euler's class $\chi_\mathbb{R}(\mathcal{B})$ of $\mathcal{B}$ is equal to $\chi(\mathcal{B})\otimes\mathbb{R}$).  Since item 2 follows from item 1 by Hodge's theorem and 1 from 2 by Theorem \ref{examples of I.H. forms},  the proof is completed by using Theorem \ref{The main theorem 1}.
\end{proof}

For a general smooth circle bundle, we have the following theorem.

\begin{theorem}\label{smooth foliated circle bundle over non-orientable manifold}
Let $\mathcal{B}=\{B,p,M,\mathbb{S}^1\} $ be a circle bundle over a  closed manifold. If fiber determines a nonzero real homological class then $\mathcal{B}$ is smooth foliated.
\end{theorem}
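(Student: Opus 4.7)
The plan is to reduce the non-orientable statement to the principal/orientable case handled by Theorem \ref{theorem about Euler's class} via a finite regular cover of $M$, and then to descend the resulting transverse foliation to $B$ via a sign-twisted averaging.

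First I would take the regular covering $q:\bar M\to M$ whose characteristic subgroup is $\ker w_1(TM)\cap\ker\alpha\subseteq\pi_1(M)$, where $\alpha\in H^1(M;\mathbb{Z}/2)$ is the obstruction to orienting the fibers of $\mathcal B$; its deck group $G$ has order dividing $4$. By construction $\bar M$ is closed and orientable and the pullback $\bar{\mathcal B}=q^*\mathcal B$ is a smooth principal $\mathbb S^1$-bundle with orientable total space $\bar B$. Writing $\bar\pi:\bar B\to B$ for the induced $G$-cover, $\bar\pi$ maps each fiber $\bar F$ of $\bar{\mathcal B}$ diffeomorphically onto a fiber $F$ of $\mathcal B$, so that $\bar\pi_*[\bar F]=[F]\neq 0$ in $H_1(B;\mathbb R)$ and consequently $[\bar F]\neq 0$ in $H_1(\bar B;\mathbb R)$. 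Theorem \ref{theorem about Euler's class} now applies to $\bar{\mathcal B}$, and (as in the proof of Theorem \ref{The main theorem 1}) yields a closed $1$-form $\bar\eta$ on $\bar B$ whose restriction to every fiber of $\bar{\mathcal B}$ is a positive volume form.

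To descend $\ker\bar\eta$ to a foliation of $B$, let $\epsilon:G\to\{\pm 1\}$ be the homomorphism sending $\gamma$ to $+1$ exactly when $\gamma$ preserves the fiber orientation of $\bar{\mathcal B}$, and put
\[
\omega=\frac{1}{|G|}\sum_{\gamma\in G}\epsilon(\gamma)\,\gamma^*\bar\eta.
\]
A brief reindexing gives $\gamma_0^*\omega=\epsilon(\gamma_0)\omega$ for all $\gamma_0\in G$; moreover each summand $\epsilon(\gamma)\gamma^*\bar\eta$ restricts to a positive volume form on every fiber of $\bar{\mathcal B}$, so $\omega|_{\bar F}>0$ and $\omega$ is nowhere zero on $\bar B$. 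Being closed and nowhere zero, $\omega$ determines the smooth foliation $\ker\omega$ of $\bar B$ transverse to the fibers; the equivariance $\gamma_0^*\omega=\pm\omega$ makes the distribution $\ker\omega$ $G$-invariant, so it descends to the desired smooth foliation of $B$ transverse to the fibers of $\mathcal B$.

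I expect the main obstacle to be coping with both non-orientabilities at once: a naive average $\sum\gamma^*\bar\eta$ can vanish on fibers through the contribution of any deck transformation that reverses the fiber orientation, and the character $\epsilon$ is precisely the twist needed to correct this and to make the resulting kernel distribution descend cleanly through $\bar\pi$. Constructing the four-to-one cover so that $\bar{\mathcal B}$ acquires a genuine principal structure is the other technical point that makes the reduction to Theorem \ref{theorem about Euler's class} work.
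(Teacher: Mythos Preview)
Your argument is correct and follows the same overall strategy as the paper: pass to a finite cover where Theorem~\ref{theorem about Euler's class} applies, obtain a transverse foliation there, and push it down to $B$. The execution, however, differs in two respects.

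First, the paper covers the \emph{total space}: it takes the orientation double cover $\pi:\tilde B\to B$ and regards $\tilde B\to M$ as a fiber bundle whose fibers have at most two circle components, then invokes Theorem~\ref{main result} for the resulting circle bundle. You instead cover the \emph{base}, taking the regular cover of $M$ that kills both $w_1(TM)$ and the fiber-orientation obstruction $\alpha$, so that the pulled-back bundle is genuinely principal over a closed orientable base and Theorem~\ref{theorem about Euler's class} applies directly. Your set-up is cleaner, since it avoids having to argue separately about the (possibly disconnected) fibers of $\tilde B\to M$ and about orientability of the intermediate base.

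Second, and more substantively, the descent step is handled differently. The paper simply asserts that $\pi_*(T\tilde{\mathcal F})$ is an involutive constant-rank distribution on $B$; this tacitly requires $T\tilde{\mathcal F}$ to be invariant under the deck transformation, a point the paper does not address. Your sign-twisted average $\omega=\tfrac{1}{|G|}\sum_\gamma\epsilon(\gamma)\gamma^*\bar\eta$ manufactures the needed equivariance explicitly: each summand is positive on the fibers, so $\omega$ is nowhere zero, while $\gamma_0^*\omega=\epsilon(\gamma_0)\omega$ guarantees that $\ker\omega$ is $G$-invariant and hence descends. This is the more robust argument, and your remark that an untwisted average could vanish on fibers pinpoints exactly why the character $\epsilon$ is necessary.
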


\begin{proof}
Let $\pi:\tilde{B}\longrightarrow B$ the double cover of $B$. Since $p$ is a submersion, by  Eheresmann's lemma \cite{ehresmann1947topologie}, $\tilde{\mathcal{B}}=\{\tilde{B},p\circ\pi,M\}$ determines a fiber bundle with fibers given by the connected components of $\pi^{-1}(p^{-1}(x))$ (that are at most two).   Let $B_x$ the fiber of $\mathcal{B}$ over $x$. Suppose that $[B_x]\neq 0 $ in $H_1(B;\mathbb{R})$. Then, given fiber $\tilde{B}_x$ of $\tilde{\mathcal{B}}$ satisfying $\pi(\tilde{B}_x)= B_x$, we have $\pi_*([\tilde{B}_x])=[B_x]\neq 0$, and $[\tilde{B}_x]\neq 0$. It follows by Theorem \ref{main result} that $\tilde{\mathcal{B}}$ is smooth foliated. Let $\tilde{\mathcal{F}}$ be a transversal foliation to $\tilde{\mathcal{B}}$. Then, $\pi_*(T\tilde{\mathcal{F}})$ is an involutive constant rank distribution transversal to the fibers of $\mathcal{B}$, since   $p_*[X,Y]=[p_*X,p_*Y]$. Hence, it give a  transversal foliation $\mathcal{F}$ on $B$, concluding that $\mathcal{B}$ is smooth foliated.   
\end{proof}

\providecommand{\href}[2]{#2}

\end{document}